\theoremstyle{plain}
\newtheorem*{thm}{Theorem \ref{main1}}
\newtheorem{theorem}{Theorem}[section]
\newtheorem{lemma}[theorem]{Lemma}
\newtheorem{proposition}[theorem]{Proposition}
\newtheorem{prop-def}[theorem]{Proposition-Definition}
\theoremstyle{definition}
\newtheorem{examples}[theorem]{Examples}
\newtheorem{remark}[theorem]{Remark}
\theoremstyle{remark}
\newtheorem*{ack}{Acknowledgement}
\numberwithin{equation}{section}
\def\s{\mathbf{s}}
\def\p{\mathrm{p}}
\def\red{\mathrm{red}}
\def\gcd{\mathrm{gcd}}
\def\mon{\mathrm{mon}}
\def\mot{\mathrm{mot}}
\def\ord{\mathrm{ord}}
\def\Sp{\mathrm{Sp}}
\def\l{\mathrm{left}}
\def\r{\mathrm{right}}
\def\Spec{\mathrm{Spec}}
\def\Lbb{\mathbb{L}}
\def\md{\mathrm{mod}\ }
\def\Var{\mathrm{Var}}
\def\sr{\mathrm{sr}}
\title{Motivic Milnor fibers of plane curve singularities}  
\author{L\^e Quy Thuong}
\address{Department of Mathematics, Vietnam National University \newline \indent 334 Nguyen Trai Street, Thanh Xuan District, Hanoi, Vietnam}
\email{leqthuong@gmail.com}
\thanks{This research is funded by Vietnam National Foundation for Science and Technology Development (NAFOSTED) under grant number FWO.101.2015.02.}
\keywords{Plane curve singularity, Newton polyhedron, resolution of singularity, extended resolution graph, arc spaces, motivic integration, motivic zeta function, motivic Milnor fiber}
\subjclass[2010]{Primary 14B05, 14E15, 14E18, 14H20, 14M25, 32B30, 32S55}
\begin{document}           
\begin{abstract}
We compute the motivic Milnor fiber of a complex plane curve singularity in an inductive and combinatoric way  using the extended simplified resolution graph. The method introduced in this article has a consequence that one can study the Hodge-Steenbrink spectrum of such a singularity in terms of that of a quasi-homogeneous singularity.
\end{abstract}
\maketitle                 

\section{Introduction}\label{sec1}
For more than two decades geometric motivic integration \cite{DL,DL1,DL2} has been a powerful tool in algebraic geometry and many branches of mathematics, in particular it has several important applications to singularity theory. Indeed, the work \cite{DL} of Denef-Loeser gives a breakthrough point of view in studying singularity theory, with the philosophy that motivic Milnor fiber is a motivic incarnation of the classical Milnor fiber. One shows that many invariants in singularity theory such as Hodge-Euler characteristic, Hodge polynomials and Hodge-Steenbrink spectra can be read off from motivic zeta functions and motivic Milnor fibers by means of appropriate Hodge realizations (cf. \cite{DL, DL1, DL2}, \cite{G}, \cite{GLM}). Motivic zeta functions and motivic Milnor fibers recently also bring considerable advances in the study of monodromy conjecture. Therefore, the computation of motivic zeta functions and motivic Milnor fibers is widely taken care by geometers and singularity theorists. For instance, Denef-Loeser \cite{DL1,DL4} describe explicitly the motivic zeta function and the motivic Milnor fiber of a regular function using resolution of singularity, Guibert \cite{G} and Steenbrink \cite{St} compute the motivic Milnor fiber of a non-degenerate singularity in terms of its Newton boundary. For composition, several aspects of the computation problem have been achieved due to \cite{DL3}, \cite{Loo}, \cite{GLM,GLM1,GLM2}, \cite{Thuong3}, etc.

In connection with the Hodge-Steenbrink spectrum the best result was quickly obtained by Denef-Loeser \cite{DL1}, which states that the spectrum of the motivic Milnor fiber and that of the classical Milnor fiber of a singularity are the same. Combing this with the motivic formula of Guibert-Loeser-Merle \cite{GLM} recovers a spectral formula conjectured in the 1980s by Steenbrink (proved earlier in \cite{Sa2}). Another interesting consequence of the result in \cite{DL1} is the proof by Budur \cite{Bu} on the equalities between the spectrum multiplicities and the inner jumping multiplicities with respect to the jumping numbers in $(0,1]\cap \mathbb Q$. In general, the motivic method in \cite{DL1} for computing spectra seems not to be easier than classical methods in practice. However, that the motivic method is more effective is possible provided the singularity is non-degenerate with respect to its Newton polyhedron or the singularity is irreducible plane curve as known. The latter is realized in \cite{G} and \cite{GVKM}, in which the spectrum of an irreducible plane curve singularity is computed via its motivic Milnor fiber, and this allows to reduce the problem to computing the spectrum of a quasi-homogeneous singularity of the form $(y^a+x^b)^N$, for some $a, b, N$ in $\mathbb N_{>0}$ and $(a,b)=1$.

In the present article we give a formula for the motivic Milnor fiber of an arbitrary complex plane curve singularity, which allows to express the spectrum of the singularity in terms of the spectra of quasi-homogeneous singularities of the form $\prod_{i=1}^r(\eta_iy^a+\xi_ix^b)^{N_i}$, for some $a, b, N_i$ in $\mathbb N_{>0}$ and $\xi_i, \eta_i$ in $\mathbb C$.  Let $f=(f,O)$ be the germ of a holomorphic function at the origin of $\mathbb{C}^2$. The classical Milnor fiber $M_f$ of $f$ is a crucial object for studying the singularity of $f$, it is up to homotopy nothing else than a bouquet of circles (cf. \cite{M}), the number of circles in the bouquet, i.e., the Milnor number of $f$, is a topological invariant. A generator of the fundamental group of a punctured disk, namely, the homotopic class of a small loop going once around the origin, which acts naturally on $H^i(M_f,\mathbb C)$, induces the monodromy $T_f$ of the singularity. To have a panorama picture about $f$ it requires at least profound knowledge from both $M_f$ and $T_f$ in the same importance. As explained in \cite{DL1}, the motivic Milnor fiber $\mathscr S_{f,O}$ of $f$, which lives in the Grothendieck ring $\mathscr M_{\mathbb C}^{\hat\mu}$ (cf. Section \ref{section3}), may carry information of both the Milnor fiber $M_f$ and the monodromy $T_f$. 

L\^e-Oka in \cite{LeOk} encode a resolution of singularity into a graph $\mathbf G$. Based on this, for $f$ plane curve singularity, our previous work \cite{Thuong2} constructs a so-called extended simplified resolution graph $\mathbf G_{\s}$ of $f$, which is independent of the choice of resolution for $f$, and using $\mathbf G_{\s}$ the monodromy zeta function $Z_{f,O}^{\mon}(t)$ can be described combinatorially and inductively (cf. \cite{Thuong2}). The present work is motivated by Denef-Loeser \cite{DL1} and Thuong \cite{Thuong2}, where we obtain a formula for $\mathscr S_{f,O}$ in terms of $\mathbf G_{\s}$. As in \cite{Thuong2} and Section \ref{sec2}, $\mathbf G_{\s}$ is arranged from simplified bamboos $\mathscr B$, in which each of the vertices $E(P_{\mathscr B,i})$ of $\mathbf G_{\s}$ in $\mathscr B$ (ordered naturally by $i$) attaches with a multiplicity $m(P_{\mathscr B,i})$ and a function of the form 
$$f_{P_{\mathscr B,i}}(u,v)=v^{\sum_{t=i+1}^{m_{\mathscr B}}a_{\mathscr B,t}A_{\mathscr B,t}}u^{m(P[\mathscr B])+\sum_{t=1}^{i-1}b_{\mathscr B,t}A_{\mathscr B,t}}\prod_{j=1}^{r_{\mathscr B,i}}(v^{a_{\mathscr B,i}}+\xi_{\mathscr B,i,j}u^{b_{\mathscr B,i}})^{A_{\mathscr B,i,j}},$$
while each edge connecting $E(P_{\mathscr B,i})$ and $E(P_{\mathscr B,i+1})$ attaches with the function 
$$f_{P_{\mathscr B,i,i+1}}(u,v)=v^{\sum_{t=i+1}^{m_{\mathscr B}}a_{\mathscr B,t}A_{\mathscr B,t}}u^{m(P[\mathscr B])+\sum_{t=1}^ib_{\mathscr B,t}A_{\mathscr B,t}},$$
where $A_{\mathscr B,i}=\sum_{j=1}^{r_{\mathscr B,i}}A_{\mathscr B,i,j}$, and $m(P[\mathscr B])$ is the multiplicity of $E(P[\mathscr B])$, the predecessor of $E(P_{\mathscr B,1})$ in $\mathbf G_{\s}$. 
The multiplicities are computed inductively via $\mathbf G_{\s}$ in \cite[Lemmas 3.1, 3.3]{Thuong2}. Put $\mu_n(\mathbb C)=\Spec(k[t]/(t^n-1))$, and
$$
X(\mathscr B,i):=\{(u,v)\in\mathbb A_{\mathbb C}^2\mid f_{P_{\mathscr B,i}}(u,v)=1\},$$
and 
$$
X(\mathscr B,i,i+1):=\{(u,v)\in\mathbb A_{\mathbb C}^2\mid f_{P_{\mathscr B,i,i+1}}(u,v)=1\}.$$

The following is the main result of the present article, which is proved in Section \ref{section4}.

\begin{thm}
The motivic Milnor fiber $\mathscr S_{f,O}$ is expressed as follows in the ring $\mathscr M_{\mathbb C}^{\hat\mu}$:
\begin{align*}
\mathscr S_{f,O}=\sum_{\mathscr B}\left(\sum_{i=1}^{m_{\mathscr B}}[X(\mathscr B,i)]-\sum_{i=1}^{m_{\mathscr B}-1}[X(\mathscr B,i,i+1)]-(\Lbb-1)\sum_{i=1}^{m_{\mathscr B}}\sum_{j=1}^{r_{\mathscr B,i}}[\mu_{A_{\mathscr B,i,j}}(\mathbb C)]\right),
\end{align*}
where the first sum $\sum_{\mathscr B}$ runs over the bamboos $\mathscr B$ of $\mathbf G_{\s}$.
\end{thm}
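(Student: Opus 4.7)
The plan is to derive the formula by applying the Denef-Loeser expression of $\mathscr{S}_{f,O}$ in terms of an embedded resolution of $(f^{-1}(0),O)$, and then reorganizing the resulting sum according to the bamboo decomposition of $\mathbf{G}_{\mathbf{s}}$ together with the local equations $f_{P_{\mathscr{B},i}}$ and $f_{P_{\mathscr{B},i,i+1}}$ supplied by \cite{Thuong2}.

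First I would fix a log resolution $h : Y \to (X,O)$ of $f^{-1}(0)$ realizing $\mathbf{G}_{\mathbf{s}}$ after simplification, and write the Denef-Loeser formula
$$\mathscr{S}_{f,O}=\sum_{\emptyset\neq I}(1-\Lbb)^{|I|-1}[\tilde E_I^\circ],$$
where $\tilde E_I^\circ\to E_I^\circ$ is the standard $\hat\mu$-equivariant étale cover associated with $f\circ h$. Since an embedded resolution of a plane curve produces a tree of smooth rational curves with simple normal crossings, the nonempty strata occur only for $|I|=1$ (an open part $E_v^\circ$ of a single component, indexed by a vertex $v$ of $\mathbf{G}_{\mathbf{s}}$) and $|I|=2$ (a crossing $E_v\cap E_{v'}$, indexed by an edge). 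Hence the formula collapses to
$$\mathscr{S}_{f,O}=\sum_{v\in V(\mathbf{G}_{\mathbf{s}})}[\tilde E_v^\circ]-(\Lbb-1)\sum_{e\in E(\mathbf{G}_{\mathbf{s}})}[\tilde E_e^\circ].$$

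Next I would identify each term in the chosen local charts. At a vertex $v=P_{\mathscr{B},i}$ the construction of $\mathbf{G}_{\mathbf{s}}$ supplies coordinates $(u,v)$ in which $f\circ h$ equals, up to a unit, $f_{P_{\mathscr{B},i}}(u,v)$, and similarly $f_{P_{\mathscr{B},i,i+1}}(u,v)$ at an edge. The affine variety $X(\mathscr{B},i)=\{f_{P_{\mathscr{B},i}}=1\}$ automatically lies in the complement of $\{u=0\}$, $\{v=0\}$, and of each $\{v^{a_{\mathscr{B},i}}+\xi_{\mathscr{B},i,j}u^{b_{\mathscr{B},i}}=0\}$; thus it agrees with the image of $\tilde E_{P_{\mathscr{B},i}}^\circ$ in the chart, but thickened along the neighborhoods of each strict transform branch. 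The contribution of such a thickening, computed by restricting the extraction-of-root cover to a punctured neighborhood of $\{v^{a_{\mathscr{B},i}}+\xi_{\mathscr{B},i,j}u^{b_{\mathscr{B},i}}=0\}$ and carrying the induced $\mu_{A_{\mathscr{B},i,j}}$-action, is exactly $(\Lbb-1)[\mu_{A_{\mathscr{B},i,j}}(\mathbb C)]$. Dually, an edge stratum is a single point of the resolution and $X(\mathscr{B},i,i+1)$ is its $\mathbb G_m$-thickening cut out by a monomial equation, giving the identity $[X(\mathscr{B},i,i+1)]=(\Lbb-1)[\tilde E_{P_{\mathscr{B},i},P_{\mathscr{B},i+1}}^\circ]$.

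Substituting these identifications into the Denef-Loeser formula, the vertex terms $[\tilde E_{P_{\mathscr{B},i}}^\circ]$ collect into $\sum_{\mathscr{B},i}[X(\mathscr{B},i)]$ minus the strict-transform boundary contributions $(\Lbb-1)[\mu_{A_{\mathscr{B},i,j}}(\mathbb C)]$, while the intra-bamboo edge terms contribute $-[X(\mathscr{B},i,i+1)]$. Inter-bamboo edges connecting $P_{\mathscr{B},1}$ to its predecessor $P[\mathscr{B}]$ require no separate term: they are encoded in the exponent $m(P[\mathscr{B}])$ of $u$ in $f_{P_{\mathscr{B},1}}$, so each such edge is covered once from the side of the daughter bamboo. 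Assembling these contributions yields the claimed formula.

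The principal obstacle will be the previous identification step: ensuring that the $\hat\mu$-equivariant structure is tracked correctly on every boundary piece, so that the strict-transform thickenings really produce the class $(\Lbb-1)[\mu_{A_{\mathscr{B},i,j}}(\mathbb C)]$ with the right Galois action, and that the monomial-equation edge sets $X(\mathscr{B},i,i+1)$ coincide as $\hat\mu$-varieties with $(\Lbb-1)[\tilde E_e^\circ]$. Once this equivariant bookkeeping is settled, the rest is a combinatorial reorganization along the bamboos of $\mathbf{G}_{\mathbf{s}}$.
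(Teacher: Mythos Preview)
Your approach differs substantially from the paper's and has a genuine gap. The Denef--Loeser formula you invoke is indexed by the components of an actual log resolution, i.e.\ by the full graph $\mathbf G$, not by the simplified graph $\mathbf G_{\s}$. You write $\sum_{v\in V(\mathbf G_{\s})}[\tilde E_v^\circ]-(\Lbb-1)\sum_{e\in E(\mathbf G_{\s})}[\tilde E_e^\circ]$ as if the resolution were already encoded by $\mathbf G_{\s}$, but between two consecutive vertices $E(P_{\mathscr B,i})$ and $E(P_{\mathscr B,i+1})$ of $\mathbf G_{\s}$ there is in general a chain of degree-$2$ exceptional curves $E(Q_{\mathscr B,j})$ coming from the regular simplicial subdivision, each contributing its own $[\tilde E(Q_{\mathscr B,j})^\circ]$ and edge terms. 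Your ``edge'' class $[X(\mathscr B,i,i+1)]$ is therefore not $(\Lbb-1)[\tilde E_e^\circ]$ for a single edge $e$; it must absorb an entire chain, and you give no argument for why these chain contributions telescope to the monomial variety $X(\mathscr B,i,i+1)$ $\hat\mu$-equivariantly. The same problem affects the inter-bamboo edges: from $E(P[\mathscr B])$ to $E(P_{\mathscr B,1})$ the full graph again has a chain of $E(Q_{\mathscr B,j})$'s, and your claim that the exponent $m(P[\mathscr B])$ in $f_{P_{\mathscr B,1}}$ ``encodes'' this is only a heuristic.

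The paper avoids this difficulty by a two-step argument that does \emph{not} attempt to identify the covers $\tilde E_v^\circ$ with pieces of $X(\mathscr B,i)$ directly. First (Proposition~\ref{prop:3.2}) it applies Denef--Loeser to both $f$ and to each Newton principal part $f_{\mathscr B_{\s}}$ to obtain
\[
\mathscr S_{f,O}=\mathscr S_{f_{\mathscr B_{1,\s}},O}+\sum_{\mathscr B\neq\mathscr B_1}\Big(\mathscr S_{f_{\mathscr B_{\s}},O}+(\Lbb-1)[\mu_{\gcd(\cdot,\cdot)}(\mathbb C)]\Big),
\]
where the $\gcd$ correction accounts precisely for the inter-bamboo edge. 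Second (Proposition~\ref{prop:3.3}, via Lemmas~\ref{lem:3.4} and~\ref{lem:3.5}) it computes each $\mathscr S_{f_{\mathscr B_{\s}},O}$ \emph{from the motivic zeta function}, by stratifying arc space according to the Newton fan of $f_{\mathscr B_{\s}}$ and summing over cones. This arc-space computation produces the face-function classes $[f_{P_{\mathscr B,i}}^{-1}(1)]$ and $[f_{P_{\mathscr B,i,i+1}}^{-1}(1)]$ directly, together with the $(\Lbb-1)[\mu_{A_{\mathscr B,i,j}}(\mathbb C)]$ terms, and the boundary classes $[\mu_{m(Q^{\l})}(\mathbb C)]$, $[\mu_{m(Q_{\mathscr B}^{\r})}(\mathbb C)]$. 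The degree-$2$ vertices of $\mathbf G$ never enter, because the arc-space side sees only the faces of the Newton polyhedron. The final step is the observation that passing from $f_{P_{\mathscr B,i}}^{-1}(1)\subset\mathbb G_{m,\mathbb C}^2$ to $X(\mathscr B,i)\subset\mathbb A_{\mathbb C}^2$ adds exactly $[\mu_{m(Q^{\l})}(\mathbb C)]$ (for $\mathscr B=\mathscr B_1$, $i=1$) or $[\mu_{m(Q_{\mathscr B}^{\r})}(\mathbb C)]$ (for $i=m_{\mathscr B}$), which absorbs the leftover boundary terms and the inter-bamboo $\gcd$ corrections cancel between Propositions~\ref{prop:3.2} and~\ref{prop:3.3}. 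If you want to pursue your resolution-side route, you would need a separate lemma showing that the chain of $[\tilde E(Q_{\mathscr B,j})^\circ]$'s between two $P$-vertices collapses $\hat\mu$-equivariantly to the single monomial class; the paper's arc-space detour is precisely what lets it bypass that computation.
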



We aimed to treat a bigger problem, that proves the Hertling conjecture for an arbitrary complex plane curve singularity $f$. The conjecture for plane curve states that if $\mu$ is the Milnor number of $f$ and $\alpha_1\leq \dots\leq\alpha_{\mu}$ are spectral numbers of $f$ counted with the spectrum
multiplicities then $\frac{1}{\mu}\sum_{i=1}^{\mu}(\alpha_i-1)^2\leq \frac{1}{12}(\alpha_{\mu}-\alpha_1)$. Saito \cite{Saito} proves the conjecture for $f$ irreducible. To go further we need to compute the Hodge-Steenbrink spectrum $\Sp(f,O)$, or, modulo Theorem \ref{main1} and \cite{DL1}, to compute $\Sp(X(\mathscr B,i))$ and $\Sp(X(\mathscr B,i,i+1))$ (see Remark \ref{rk46}). However, it is well known that computing $\Sp(X(\mathscr B,i))$ and $\Sp(X(\mathscr B,i,i+1))$, as well as the spectrum of any quasi-homogeneous singularity, is still an open problem in general.


\section{Preliminaries on complex plane curve singularity, after \cite{Thuong2}}\label{sec2}
\subsection{Extended resolution graphs}\label{subsec2.1}
Let $f$ be the germ of a holomorphic function at the origin $O$ of $\mathbb{C}^2$, and let $C=f^{-1}(0)$. Remark that in the below arguments we need not to assume that $(f,O)$ is an isolated singularity. In \cite{LeOk}, L\^e-Oka define the extended resolution graph of a resolution of singularity, and their idea is the following. Let $\pi$ be a resolution of singularity of $f$ with $E_i$, $i\in A$, the irreducible components of $\pi^{-1}(C)_{\red}$. By \cite{LeOk}, the {\it extended resolution graph} $\mathbf G=\mathbf G(f,\pi)$ of $\pi$ is a graph whose vertices are $E_i$, $i\in A$, such that two vertices $E_i$ and $E_j$ are connected by a single edge if and only if the intersection $E_i\cap E_j$ is nonempty. There is a derived graph of $\mathbf G$ that will be useful in the later part of this paper, it is obtained from $\mathbf G$ by removing all the vertices of degree $2$, which is called {\it the simplified extended resolution graph} and denoted by $\mathbf G_{\s}$. 

In what follows we fix $\pi$ as a resolution of singularity of $f$ that is obtained by a set of toric modifications whose centers are determined canonically as in \cite{Thuong2}. Note that the graphs $\mathbf G$ and $\mathbf G_{\s}$ admit an explicit description, as in \cite{Thuong2}, and we shall use this description in the rest of the present paper. Let $\mathbf G_{\p}$ be the graph whose vertices correspond bijectively to the total space of each toric modification and the base space and whose edges correspond to each toric modification. Consider the origin $O$ of the base space as the root of $\mathbf G$ (and of $\mathbf G_{\s}$). We identify each face $P_i$ ($1\leq i\leq m$) of the Newton polyhedron $\Gamma$ of $f$ with a primitive weight vector and assume that $\det(P_i,P_{i+1})\geq 1$ for any $1\leq i\leq m-1$. Consider a regular simplicial cone subdivision $\{Q_1,\dots,Q_d\}$ which contains $P_1,\dots,P_m$. By definition, $Q_0=(1,0)^t$, $Q_{d+1}=(0,1)^t$, and the $Q_j$'s are primitive weight vectors and $\det(Q_j,Q_{j+1})=1$ for all $0\leq j\leq d$. Let $Q^{\l}:=Q_1$ and $Q^{\r}:=Q_d$. The first toric modification $\pi_O$ centered at the origin $O$ of $\mathbb{C}^2$ provides first vertices $E(Q_1),\dots,E(Q_d)$ of $\mathbf G$ and also provides first vertices $E(Q^{\l}), E(P_1),\dots,E(P_m), E(Q^{\r})$ of $\mathbf G_{\s}$. These give rise to a unique bamboo $\mathscr B_1$ (resp. $\mathscr B_{1,\s}$) in the first floor of $\mathbf G$ (resp. $\mathbf G_{\s}$). As explained in \cite{Thuong2}, we can take $Q^{\l}\not=P_1$ and $Q^{\r}\not=P_m$, and in view of (\ref{eq:2.3}) we can consider $\mathscr B_{1,\s}$ as independent of the choice of $\pi$.

Let $\pi_{\theta}:X_i\rightarrow X_j$ be an arbitrary toric modification different to $\pi_O$ which appears in $\mathbf G_{\p}$, where the center $\theta$ is one of the intersection points of $E(Q)$ and the strict transform of $C$ in $X_j$, with $Q$ a primitive weight vector of the previous toric modification $X_j\rightarrow X_k$. Assume by induction that the partial resolution graphs are already constructed and that $Q$ is the corresponding vertex of the simplified graph $\mathbf G_{\s}$. In \cite{AO}, A'Campo-Oka give a canonical way to determine a local coordinates $(u,v)$ at $\theta$ so that $u=0$ defines $E(Q)$, and in the coordinates one obtains the Newton polyhedron of the pullback of $f$. We denote the faces of this Newton polyhedron by $P_1^{\theta},\dots, P_{m_{\theta}}^{\theta}$, which may be ordered due to the condition $\det(P_i^{\theta},P_{i+1}^{\theta})\geq 1$ for all $1\leq i\leq m_{\theta}$. Fitting them into a regular simplicial cone subdivision $Q_1^{\theta},\dots, Q_{d_{\theta}}^{\theta}$, we get a toric modification at $\theta$. Then $E(Q_1^{\theta}),\dots, E(Q_{d_{\theta}}^{\theta})$ are the vertices in a horizontal bamboo of $\mathbf G$, say, $\mathscr B$, and $E(P_1^{\theta}),\dots, E(P_{m_{\theta}}^{\theta}),E(Q^{\theta,\r})$ are the vertices in a horizontal bamboo $\mathscr B_{\s}$ of $\mathbf G_{\s}$, where $Q^{\theta,\r}$ stands for $Q_{d_{\theta}}^{\theta}$. For convenience, we shall write from now on $Q_{\mathscr B,1},\dots, Q_{\mathscr B,d_{\mathscr B}}$ for $Q_1^{\theta},\dots, Q_{d_{\theta}}^{\theta}$ and write $P_{\mathscr B_{\s},1},\dots, P_{\mathscr B_{\s},m_{\mathscr B_{\s}}}, Q_{\mathscr B_{\s}}^{\r}$ for $P_1^{\theta},\dots, P_{m_{\theta}}^{\theta}, Q^{\theta,\r}$. We can assume $Q_{\mathscr B_{\s}}^{\r}\not=P_{\mathscr B_{\s},m_{\mathscr B_{\s}}}$, since if $P_{\mathscr B_{\s},m_{\mathscr B_{\s}}}$ has the form $(1,b)$ we may choose $Q_{\mathscr B_{\s}}^{\r}=(1,b+1)^t$. By (\ref{eq:2.8}) we may consider $\mathscr B_{\s}$ not to depend on $\pi_{\theta}$.  Now, we connect the vertex $E(Q_{\mathscr B,1})$ (resp. $E(P_{\mathscr B_{\s},1})$) in the present floor, say, $n$th, with the vertex $E(Q)$ in the $(n-1)$th floor by a non-horizontal single edge for $\mathbf G$ (resp. for $\mathbf G_{\s}$). For convenience, the connection edge is taken into account of the predecessor bamboo. Inductively, this process describes the extended resolution graph $\mathbf G$ and the simplified extended resolution graph $\mathbf G_{\s}$ (cf. \cite{Thuong2}).



\subsection{Multiplicities}\label{subsec2.2}
Let us write $f(x,y)$ as a finite product of irreducible germs, namely,
\begin{align}\label{eq:2.1}
f(x,y)=\prod_{i=1}^m\prod_{j=1}^{r_i}\prod_{l=1}^{s_{ij}}g_{i,j,l}(x,y),
\end{align}
where 
\begin{align}\label{eq:2.2}
g_{i,j,l}(x,y)=(y^{a_i}+\xi_{i,j}x^{b_i})^{A_{i,j,l}}+\text{(higher terms)}
\end{align}
are irreducible in $\mathbb{C}\{x\}[y]$ and $\xi_{i,j}$ are nonzero and distinct. Then the $\Gamma$-principal part of $f$, that corresponds to the first floor bamboo $\mathscr B_1$, is the following 
\begin{align}\label{GPP1}
f_{\mathscr B_{1,\s}}(x,y)=\prod_{i=1}^m\prod_{j=1}^{r_i}(y^{a_i}+\xi_{i,j}x^{b_i})^{A_{i,j}},
\end{align}
where 
$$A_{i,j}=\sum_{l=1}^{s_{ij}}A_{i,j,l}.$$ In fact, the $\Gamma$-principal part of $f$ only depends on $\mathscr B_{1,\s}$ and not on $\mathscr B_1$, hence the reason of notation $f_{\mathscr B_{1,\s}}$. The primitive weight vectors $P_i=(a_i,b_i)^t$, $1\leq i\leq m$, are ordered by the condition that $\det(P_i,P_{i+1})\geq 1$ for all $1\leq i< m$. The first toric modification $\pi_O$ of $f$ provides the unique bamboo at the first floor of $\mathbf G_{\s}$ whose vertices are $E(Q^{\l}), E(P_1),\dots, E(P_m)$ and $E(Q^{\r})$, in which the vertex $E(P_i)$ has degree $r_i+2$ for each $1\leq i\leq m$. Let $m(P)$ be the multiplicity of $\pi_O^*f$ on $E(P)$. By \cite[Lemma 3.1]{Thuong2}, we have
\begin{align}\label{eq:2.3}
m(Q^{\l})=\sum_{t=1}^ma_tA_t,\quad m(Q^{\r})=\sum_{t=1}^mb_tA_t
\end{align}
and
\begin{align}\label{eq:2.4}
m(P_i)=a_i\sum_{1\leq t\leq i}b_tA_t+b_i\sum_{i+1\leq t \leq m}a_tA_t,\ 1\leq i\leq m,
\end{align}
where $A_t=\sum_{j=1}^{r_t}\sum_{l=1}^{s_{t,j}}A_{t,j,l}$.


For a bamboo $\mathscr B\not=\mathscr B_1$ in $\mathbf G$, let $E(P)$ be the predecessor of $E(P_{\mathscr B_{\s},1})$ in $\mathbf G_{\s}$. Assume that the multiplicity $m(P)$ is already given. Let $\Phi$ be the resolution tower formed from the toric modifications earlier than the one corresponding to $\mathscr B_{\s}$ (with the center $\theta$ as above). Then, the pullback $\Phi^*f$ in the local coordinates $(u,v)$ at $\theta$ has the form
\begin{align}\label{eq:2.5}
\Phi^*f(u,v)=U(u,v)u^{m(P)}\prod_{i=1}^{m_{\mathscr B_{\s}}}\prod_{j=1}^{r_{\mathscr B_{\s},i}}\prod_{l=1}^{s_{\mathscr B_{\s},ij}}g_{\mathscr B_{\s},i,j,l}(u,v),
\end{align}
where 
\begin{align}\label{eq:2.6}
g_{\mathscr B_{\s},i,j,l}(u,v)=(v^{a_{\mathscr B_{\s},i}}+\xi_{\mathscr B_{\s},i,j}u^{b_{\mathscr B_{\s},i}})^{A_{\mathscr B_{\s},i,j,l}}+\text{(higher terms)}
\end{align}
are irreducible in $\mathbb{C}\{u\}[v]$, in which $\xi_{\mathscr B_{\s},i,j}$ are nonzero and distinct, and $U(u,v)$ is a unit in the ring $\mathbb{C}\{u,v\}$. Thus the $\Gamma(\Phi^*f)$-principal part of $\Phi^*f(u,v)$ is the following 
\begin{align}\label{GPPB}
f_{\mathscr B_{\s}}(u,v)=u^{m(P)}\prod_{i=1}^{m_{\mathscr B_{\s}}}\prod_{j=1}^{r_{\mathscr B_{\s},i}}(v^{a_{\mathscr B_{\s},i}}+\xi_{\mathscr B_{\s},i,j}u^{b_{\mathscr B_{\s},i}})^{A_{\mathscr B_{\s},i,j}},
\end{align}
where, by definition, 
$$A_{\mathscr B_{\s},i,j}=\sum_{l=1}^{s_{\mathscr B_{\s},ij}}A_{\mathscr B_{\s},i,j,l}.$$ 
Note that $f_{\mathscr B_{\s}}$ depends only on $\mathscr B_{\s}$ and not on $\mathscr B$. Furthermore, the primitive weight vectors $P_{\mathscr B_{\s},i}=(a_{\mathscr B_{\s},i},b_{\mathscr B_{\s},i})^t$, $1\leq i\leq m_{\mathscr B_{\s}}$, are ordered by the condition that $\det(P_{\mathscr B_{\s},i},P_{\mathscr B_{\s},i+1})\geq 1$ for all $1\leq i< m_{\mathscr B_{\s}}$. The corresponding toric modification gives rise to a bamboo of $\mathbf G_{\s}$ with vertices $E(Q_{\mathscr B_{\s}}^{\l}), E(P_{\mathscr B_{\s},1}),\dots, E(P_{\mathscr B_{\s},m})$ and $E(Q_{\mathscr B_{\s}}^{\r})$, and the degree of each $E(P_{\mathscr B_{\s},i})$ in $\mathbf G_{\s}$ is $r_{\mathscr B_{\s},i}+2$. By \cite[Lemma 3.3]{Thuong2}, we have, for $1\leq i\leq m_{\mathscr B_{\s}}$, 
\begin{equation}\label{eq:2.7}
\begin{aligned}
m(P_{\mathscr B_{\s},i})=a_{\mathscr B_{\s},i}m(P)&+a_{\mathscr B_{\s},i}\sum_{1\leq t\leq i}b_{\mathscr B_{\s},t}A_{\mathscr B_{\s},t}+b_{\mathscr B_{\s},i}\sum_{i+1\leq t \leq m_{\mathscr B_{\s}}}a_{\mathscr B_{\s},t}A_{\mathscr B_{\s},t}
\end{aligned}
\end{equation}
and
\begin{align}\label{eq:2.8}
m(Q_{\mathscr B_{\s}}^{\r})=m(P)+\sum_{t=1}^{m_{\mathscr B_{\s}}}b_{\mathscr B_{\s},t}A_{\mathscr B_{\s},t},
\end{align}
where $A_{\mathscr B_{\s},t}=\sum_{j=1}^{r_{\mathscr B_{\s},t}}\sum_{l=1}^{s_{\mathscr B_{\s},tj}}A_{\mathscr B_{\s},t,j,l}$ and $P$ is the predecessor of the first vertex of $\mathbf G_{\s}$ in $\mathscr B_{\s}$.

\section{The motivic Milnor fiber of a plane curve singularity}\label{section3}
\subsection{The Grothendieck ring of complex algebraic varieties with $\hat{\mu}$-action}
Consider the group schemes $\mu_n(\mathbb C)$ of $n$th roots of unity, the maps $\xi\mapsto \xi^k$, $n\geq 1$, $k\geq 1$, and let $\hat{\mu}$ be $\varprojlim\mu_n(\mathbb C)$. Let $\Var_{\mathbb C,\hat{\mu}}$ be the category of algebraic $\mathbb C$-varieties endowed with a $\hat{\mu}$-action. The Grothendieck group $K_0(\Var_{\mathbb C,\hat{\mu}})$ is an abelian group generated by symbols $[X]$ for $X$ in $\Var_{\mathbb C,\hat{\mu}}$ such that $[X]=[Y]$ whenever $X$ is $\hat{\mu}$-equivariant isomorphic to $Y$, $[X]=[Y]+[X\setminus Y]$ for $Y$ Zariski closed in $X$ with the $\hat{\mu}$-action induced from $X$ and $[X\times V]=[X\times\mathbb{A}_{\mathbb C}^e]$ if $V$ is an $e$-dimensional complex affine space with any linear $\hat{\mu}$-action and the action on $\mathbb{A}_{\mathbb C}^e$ is trivial. The abelian group $K_0(\Var_{\mathbb C,\hat{\mu}})$ becomes a ring with unit with respect to cartesian product. Let $\mathscr M_{\mathbb C}^{\hat{\mu}}$ denote $K_0(\Var_{\mathbb C,\hat{\mu}})[\Lbb^{-1}]$, where $\Lbb$ is $[\mathbb A_{\mathbb C}^1]$. 


\subsection{The motivic Milnor fiber of $(C,O)$}\label{subsec:3.2}
We continue studying the plane curve singularity $(C,O)$ in Section \ref{sec2}. In the sequel, we shall express the motivic Milnor fiber $\mathscr S_{f,O}$ in terms of the extended simplified resolution graph $\mathbf G_{\s}$. 

For $n\geq 1$, we define a $\mathbb C$-variety 
\begin{align}\label{eq:3.1}
J_n=J_n(f):=\left\{(\varphi,\psi)\in \left(t\mathbb C[t]/t^{n+1}\right)^2 \mid f(\varphi,\psi)=t^n\md  t^{n+1}\right\},
\end{align}
endowed with a $\hat{\mu}$-action via $\mu_n(\mathbb C)$ defined by $\xi\cdot(\varphi(t),\psi(t))=(\varphi(\xi t),\psi(\xi t))$, $\xi$ in $\mu_n(\mathbb C)$. Thus, it gives rise to an element $[J_n]$ in $\mathscr M_{\mathbb C}^{\hat{\mu}}$. Consider the formal power series
\begin{align}\label{eq:3.2}
Z_{f,O}^{\mot}(T):=\sum_{n\geq 1}[J_n]\Lbb^{-2n}T^n
\end{align}
in $\mathscr M_{\mathbb C}^{\hat{\mu}}[[T]]$, which is called the {\it motivic zeta function} of the singularity $f$. It was proved in \cite{DL} that $Z_{f,O}^{\mot}(T)$ is an element of $\mathscr M_{\mathbb C}^{\hat{\mu}}[[T]]_{\sr}$, the $\mathscr M_{\mathbb C}^{\hat{\mu}}$-submodule of $\mathscr M_{\mathbb C}^{\hat{\mu}}[[T]]$ generated by 1 and by finite products of $\frac{\Lbb^pT^q}{1-\Lbb^pT^q}$ with $(p,q)$ in $\mathbb Z\times\mathbb N_{>0}$. Moreover, there exists a unique $\mathscr M_{\mathbb C}^{\hat{\mu}}$-linear morphism 
$$\lim_{T\rightarrow\infty}: \mathscr M_{\mathbb C}^{\hat{\mu}}[[T]]_{\sr}\to \mathscr M_{\mathbb C}^{\hat{\mu}}$$
such that $\lim_{T\rightarrow\infty}\frac{\Lbb^pT^q}{1-\Lbb^pT^q}=-1$. Then one defines the {\it motivic Milnor fiber} of the singularity $f$ to be $-\lim_{T\rightarrow\infty}Z_{f,O}^{\mot}(T)$ and denotes it by $\mathscr S_{f,O}$.


\subsection{Denef-Loeser's formula}
%
Consider the resolution of singularity $\pi$ of $(C,O)$ in Subsection \ref{subsec2.1}. Putting $E_i^{\circ}=E_i\setminus\bigcup_{j\not=i}E_j$ with $i$, $j$ in $A$, locally, on $E_i^{\circ}$, $f(\pi(u,v))= \widetilde U(u,v)u^{m_i}$, and on $E_{i,j}^{\circ}=E_i\cap E_j\not=\emptyset$, $f(\pi(u,v))= \widetilde U(u,v)u^{m_i}v^{m_j}$, where $\widetilde U(u,v)$ is a unit. The Denef-Loeser unramified Galois covering 
$$\pi_S:\widetilde{E}_S^{\circ}\to E_S^{\circ}$$ 
with Galois group $\mu_{m_S(\mathbb C)}$ is defined locally by  
$$\{(u,v;z)\in E_S^{\circ}\times \mathbb{A}_{\mathbb C}^1 \mid z^{m_S}=\widetilde{U}(u,v)^{-1}\},$$ 
which is endowed with the $\mu_{m_S}(\mathbb C)$-action induced by multiplying the $z$-coordinate with the $m_S$-roots of unity, where $S$ is $\{i\}$ or $\{i,j\}$, $m_{i,j}$ is the greatest common divisor of $m_i$ and $m_j$. 

\begin{theorem}[Denef-Loeser, \cite{DL, DL2}]\label{DLSf}
With the previous notation, the identity
\begin{align*}
\mathscr S_{f,O}=\sum_{i\in A}[\widetilde E_i^{\circ}\cap\pi^{-1}(O)]-(\Lbb-1)\sum_{i,j\in A}[\widetilde E_{i,j}^{\circ}\cap\pi^{-1}(O)]
\end{align*}
holds in $\mathscr M_{\mathbb C}^{\hat{\mu}}$.
\end{theorem}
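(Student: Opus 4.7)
The strategy is the standard one of Denef--Loeser: compute the motivic zeta function $Z_{f,O}^{\mot}(T)$ explicitly using the resolution $\pi$, then apply $-\lim_{T\to\infty}$.

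First I would lift the arc integration to the resolution. Since $\pi$ is proper and birational, an arc $(\varphi,\psi)\in J_n$ not entirely supported on $C$ admits a unique lift $\widetilde\gamma$ to an arc on $X$, whose center $\widetilde\gamma(0)$ lies in some stratum $E_S^{\circ}$. The Denef--Loeser--Kontsevich change of variables formula then rewrites $[J_n]\Lbb^{-2n}$ as a sum, indexed by subsets $S\subset A$ of components met by $\widetilde\gamma$, of motivic integrals over spaces of lifted arcs weighted by $\Lbb^{-\ord_t\mathrm{jac}\,\pi}$. Because $\pi^{-1}(O)$ is a normal crossings divisor in the smooth surface $X$, at most two components meet at a point, so only $|S|=1$ and $|S|=2$ contribute; this is why the asserted formula contains only single and double sums over $A$. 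Moreover, restricting to arcs originating at $O$ on the source side translates to the condition that the center $\widetilde\gamma(0)$ lies in $\pi^{-1}(O)$, which is where the intersection with $\pi^{-1}(O)$ in the statement comes from.

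Next I would carry out the local computation on each stratum. In local coordinates adapted to normal crossings, one writes $f\circ\pi=\widetilde U(u,v)\,u^{m_i}$ on $E_i^{\circ}$ and $f\circ\pi=\widetilde U(u,v)\,u^{m_i}v^{m_j}$ on $E_{i,j}^{\circ}$, with $\widetilde U$ a unit. Parametrize lifted arcs by their orders of contact $k$ (resp.\ $k,\ell$) with the local components; the equation $f(\pi(u(t),v(t)))\equiv t^n\pmod{t^{n+1}}$ splits into a linear diophantine constraint on multiplicities, $k m_i=n$ (resp.\ $k m_i+\ell m_j=n$), together with a single equation $z^{m_S}=\widetilde U(u,v)^{-1}$ on the leading coefficient at the base point. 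The latter is precisely the defining equation of the Denef--Loeser cover $\pi_S:\widetilde E_S^{\circ}\to E_S^{\circ}$, and the $\mu_n$-action on arcs by $t\mapsto\xi t$ corresponds, via the surjection $\mu_n\twoheadrightarrow\mu_{m_S}$, to the canonical $\mu_{m_S}$-action on $\widetilde E_S^{\circ}$. The remaining coefficients of $u(t),v(t)$ are free, producing affine fibres with trivial $\hat\mu$-action, which may be reduced using $[X\times V]=[X\times\mathbb A_{\mathbb C}^e]$.

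Assembling these local pieces gives a closed rational form
\begin{equation*}
Z_{f,O}^{\mot}(T)=\sum_{i\in A}[\widetilde E_i^{\circ}\cap\pi^{-1}(O)]\,\frac{(\Lbb-1)\Lbb^{-\nu_i}T^{m_i}}{1-\Lbb^{-\nu_i}T^{m_i}}+\sum_{\{i,j\}}[\widetilde E_{i,j}^{\circ}\cap\pi^{-1}(O)]\prod_{k\in\{i,j\}}\frac{(\Lbb-1)\Lbb^{-\nu_k}T^{m_k}}{1-\Lbb^{-\nu_k}T^{m_k}},
\end{equation*}
where the exponents $\nu_k$ encode the orders of the relative canonical divisor along each $E_k$. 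In particular $Z_{f,O}^{\mot}(T)$ lies in $\mathscr M_{\mathbb C}^{\hat\mu}[[T]]_{\sr}$, so $-\lim_{T\to\infty}$ applies. Using $\lim_{T\to\infty}\Lbb^pT^q/(1-\Lbb^pT^q)=-1$, the single-index terms contribute $[\widetilde E_i^{\circ}\cap\pi^{-1}(O)]$ and the double-index terms contribute $-(\Lbb-1)[\widetilde E_{i,j}^{\circ}\cap\pi^{-1}(O)]$, recovering the asserted identity.

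The main obstacle is the equivariant bookkeeping in the local step: one must verify that the fibration from the arc-space stratum to $\widetilde E_S^{\circ}\cap\pi^{-1}(O)$ is piecewise trivial with affine fibres carrying the trivial $\hat\mu$-action, so that the Grothendieck ring relations apply cleanly and the $\mu_n$-action descends to the prescribed $\mu_{m_S}$-action on the Denef--Loeser cover. A related subtlety is that the expected cancellation of the Jacobian weights $\nu_k$ with the ramification of $\widetilde E_S^{\circ}\to E_S^{\circ}$ must produce precisely the factors $(\Lbb-1)$ (and not some other power of $\Lbb$) after taking the limit; this is what forces the particular shape of the formula in Theorem \ref{DLSf}. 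These verifications are carried out in detail in \cite{DL,DL2}, and in our setting amount to a specialization of their general result to the two-dimensional case.
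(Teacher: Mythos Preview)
The paper does not supply its own proof of Theorem~\ref{DLSf}; it is quoted as a result of Denef--Loeser from \cite{DL,DL2} and then used as a black box. So there is no in-paper argument to compare against, and your sketch is precisely the standard Denef--Loeser proof strategy: lift arcs through the resolution, stratify by which components of the exceptional divisor the lifted arc meets, compute each stratum locally via the normal-crossings form of $f\circ\pi$, recognize the equation on the leading coefficient as the defining equation of the cover $\widetilde E_S^{\circ}$, sum the resulting geometric series, and take $-\lim_{T\to\infty}$. That is the right outline.

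There is, however, a bookkeeping slip in your displayed rational expression for $Z_{f,O}^{\mot}(T)$. With the factors of $(\Lbb-1)$ placed inside each product as you wrote them, the $|S|=1$ term carries a coefficient $(\Lbb-1)$ and the $|S|=2$ term carries $(\Lbb-1)^2$; applying $-\lim_{T\to\infty}$ then yields $(\Lbb-1)[\widetilde E_i^{\circ}\cap\pi^{-1}(O)]$ and $-(\Lbb-1)^2[\widetilde E_{i,j}^{\circ}\cap\pi^{-1}(O)]$, not the coefficients $1$ and $-(\Lbb-1)$ that you claim in the next sentence and that appear in the theorem. The correct Denef--Loeser formula has an overall factor $(\Lbb-1)^{|S|-1}$ in front of $[\widetilde E_S^{\circ}\cap\pi^{-1}(O)]$, i.e.
\[
Z_{f,O}^{\mot}(T)=\sum_{\emptyset\neq S\subset A}(\Lbb-1)^{|S|-1}\,[\widetilde E_S^{\circ}\cap\pi^{-1}(O)]\prod_{k\in S}\frac{\Lbb^{-\nu_k}T^{m_k}}{1-\Lbb^{-\nu_k}T^{m_k}},
\]
and then the limit produces $(1-\Lbb)^{|S|-1}[\widetilde E_S^{\circ}\cap\pi^{-1}(O)]$, matching the statement. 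The missing power of $(\Lbb-1)$ is exactly the ``cancellation'' you allude to in your final paragraph; you should make it explicit rather than leave the displayed formula inconsistent with the conclusion.
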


If $E_i$ is contained in $\pi^{-1}(O)$, we have $[\widetilde E_i^{\circ}\cap\pi^{-1}(O)]=[\widetilde E_i^{\circ}]$; otherwise, $[\widetilde E_i^{\circ}\cap\pi^{-1}(O)]=0$. If $E_{i,j}^{\circ}\not=\emptyset$, then $[\widetilde E_{i,j}^{\circ}\cap\pi^{-1}(O)]=[\mu_{m_{i,j}}(\mathbb C)]$. One thus deduces from Theorem \ref{DLSf} that
\begin{align}\label{DL}
\mathscr S_{f,O}=\sum_{E_i \text{ exceptional}}[\widetilde E_i^{\circ}]-(\Lbb-1)\sum_{i,j}[\mu_{m_{i,j}}(\mathbb C)].
\end{align}

The following is a consequence of Theorem \ref{DLSf}, via (\ref{DL}), together with using $\mathbf G_{\s}$.

\begin{proposition}\label{prop:3.2} 
In $\mathscr M_{\mathbb C}^{\hat{\mu}}$, one has
\begin{align}\label{eq:3.3}
\mathscr S_{f,O}=\mathscr S_{f_{\mathscr B_{1,\s}},O}+\sum_{\mathscr B\in \mathbf B, \mathscr B\not=\mathscr B_1}&\Big(\mathscr S_{f_{\mathscr B_{\s}},O}+(\Lbb-1)[\mu_{\gcd(m(P[\mathscr B_{\s}]),\sum_{t=1}^{m_{\mathscr B_{\s}}}a_{\mathscr B_{\s},t}A_{\mathscr B_{\s},t})}(\mathbb C)]\Big),
\end{align}
where $m(P[\mathscr B_{\s}])$ is the multiplicity of $E(P[\mathscr B_{\s}])$, the predecessor of $E(P_{\mathscr B_{\s},1})$ in $\mathbf G_{\s}$. 
\end{proposition}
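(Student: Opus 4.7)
The plan is to apply the Denef--Loeser formula (\ref{DL}) to $f$ using the resolution $\pi$ of Subsection \ref{subsec2.1}, separately to each principal part $f_{\mathscr B_{\s}}$ using the toric resolution of its germ at the origin, and then to compare the two expressions bamboo-by-bamboo. First I will group the terms in $\mathscr S_{f,O}$ by the bamboos of $\mathbf G$: every exceptional divisor $E(Q_{\mathscr B,k})$ with $1\le k\le d_{\mathscr B}$ sits in a unique bamboo $\mathscr B$, and the intersection edges split into three types: (a) internal edges within a single $\mathscr B$; (b) the bridge edge $E(P[\mathscr B_{\s}])\cap E(Q_{\mathscr B,1})$ for each non-initial $\mathscr B$; (c) edges from $E(P_{\mathscr B_{\s},i})$ to strict transforms of irreducible components of $C$ at branch points $(i,j)$ where no child bamboo branches off.

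For each bamboo $\mathscr B$, applying Denef--Loeser to $f_{\mathscr B_{\s}}$ at $O$ uses the same toric modification, so the exceptional divisors are the same $E(Q_{\mathscr B,k})$ with the same multiplicities; the strict transforms that enter the local formula are the axis $\{u=0\}$ (with multiplicity $m(P[\mathscr B_{\s}])$, only when $\mathscr B\ne\mathscr B_1$) and the factor curves $\{v^{a_{\mathscr B_{\s},i}}+\xi_{\mathscr B_{\s},i,j}u^{b_{\mathscr B_{\s},i}}=0\}$ of multiplicity $A_{\mathscr B_{\s},i,j}$ at each branch point $(i,j)$. Comparing term by term: (i) the vertex term $[\widetilde E(Q_{\mathscr B,k})^\circ]$ agrees because the Denef--Loeser unit $\widetilde U$ on $E(Q_{\mathscr B,k})^\circ$ depends only on the weighted leading part of the pullback along that divisor (which coincides for $\pi^*f$ and for $f_{\mathscr B_{\s}}$) and because the set of excluded points agrees; (ii) internal edges match termwise; (iii) for non-initial $\mathscr B$ the global bridge edge and the local $\{u=0\}$-axis edge both contribute $-(\Lbb-1)[\mu_{\gcd(m(P[\mathscr B_{\s}]),m(Q_{\mathscr B,1}))}(\mathbb C)]$; (iv) at each branch point $(i,j)$ admitting a child bamboo $\mathscr B'$, the local computation for $f_{\mathscr B_{\s}}$ contains an extra edge $-(\Lbb-1)[\mu_{\gcd(m(P_{\mathscr B_{\s},i}),A_{\mathscr B_{\s},i,j})}(\mathbb C)]$ from the strict transform of $(v^a+\xi u^b)^{A_{\mathscr B_{\s},i,j}}$, whereas the matching global contribution is entirely carried by the summand $\mathscr S_{f_{\mathscr B'_{\s}},O}$. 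Hence $\sum_{\mathscr B}\mathscr S_{f_{\mathscr B_{\s}},O}$ differs from $\mathscr S_{f,O}$ by $-(\Lbb-1)[\mu_{\gcd(m(P_{\mathscr B_{\s},i}),A_{\mathscr B_{\s},i,j})}(\mathbb C)]$ for each non-initial $\mathscr B'$, and restoring these terms yields (\ref{eq:3.3}).

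To see that the gcd in this correction equals the one in the statement, I will use the child-level analog of (\ref{eq:2.3})--(\ref{eq:2.8}), giving $m(Q_{\mathscr B',1})=m(P[\mathscr B'_{\s}])+\sum_t a_{\mathscr B'_{\s},t}A_{\mathscr B'_{\s},t}$. Restricting $\Phi^*f/u^{m(P[\mathscr B'_{\s}])}$ to $\{u=0\}$ at the branch point shows that $A_{\mathscr B_{\s},i,j}$ equals the $v$-degree of the child's principal part on $\{u=0\}$, which is $\sum_t a_{\mathscr B'_{\s},t}A_{\mathscr B'_{\s},t}$; therefore $\gcd(m(P_{\mathscr B_{\s},i}),A_{\mathscr B_{\s},i,j})=\gcd(m(P[\mathscr B'_{\s}]),\sum_t a_{\mathscr B'_{\s},t}A_{\mathscr B'_{\s},t})$, as required. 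The hard part will be making the bookkeeping in (i)--(iv) precise---in particular, verifying that the Denef--Loeser cover $\widetilde U$ and the locus $E(Q_{\mathscr B,k})^\circ$ are insensitive to truncation to the principal part---together with the combinatorial identity $A_{\mathscr B_{\s},i,j}=\sum_t a_{\mathscr B'_{\s},t}A_{\mathscr B'_{\s},t}$ relating parent and child multiplicities.
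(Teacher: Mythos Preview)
Your plan is correct and is essentially the paper's own argument: apply Denef--Loeser's formula (\ref{DL}) to $f$ via the global resolution $\pi$, apply it separately to each $f_{\mathscr B_{\s}}$ via the single toric modification attached to $\mathscr B$, and match the two expressions bamboo-by-bamboo. The paper organizes this as ``substitute (\ref{eq:3.5})--(\ref{eq:3.6}) into the right-hand side of (\ref{eq:3.3}) and recognize $\mathscr S_{f,O}$'', while you organize it as ``start from $\mathscr S_{f,O}$, peel off each $\mathscr S_{f_{\mathscr B_{\s}},O}$, and identify the leftover correction terms''; these are the same computation read in opposite directions.

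Two small remarks. First, your formula $m(Q_{\mathscr B',1})=m(P[\mathscr B'_{\s}])+\sum_t a_{\mathscr B'_{\s},t}A_{\mathscr B'_{\s},t}$ is not quite right: since $Q_{\mathscr B',1}=(p,1)^t$ for some $p\ge 1$ (from $\det(Q_0,Q_1)=1$ with $Q_0=(1,0)^t$), one actually has $m(Q_{\mathscr B',1})=p\cdot m(P[\mathscr B'_{\s}])+\sum_t a_{\mathscr B'_{\s},t}A_{\mathscr B'_{\s},t}$, which is the paper's relation $m(E(Q_{\mathscr B_{\s}^{i,j},1}))=N_{\mathscr B}\,m(E(P_{\mathscr B_{\s},i}))+A_{\mathscr B_{\s},i,j}$. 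This does not affect your gcd conclusion. Second, your direct identity $A_{\mathscr B_{\s},i,j}=\sum_t a_{\mathscr B'_{\s},t}A_{\mathscr B'_{\s},t}$ (from the $v$-order of $\Phi^*f/u^{m(P[\mathscr B'_{\s}])}$ on $\{u=0\}$, which equals the intersection multiplicity carried over from the parent chart) is a clean shortcut; the paper instead routes the same gcd through $m(E(Q_{\mathscr B_{\s}^{i,j},1}))$ via the displayed relation above. Both yield $\gcd(m(P_{\mathscr B_{\s},i}),A_{\mathscr B_{\s},i,j})=\gcd\big(m(P[\mathscr B'_{\s}]),\sum_t a_{\mathscr B'_{\s},t}A_{\mathscr B'_{\s},t}\big)$, which is exactly what (\ref{eq:3.3}) requires.
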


\begin{remark}
The first bamboo is also the top bamboo if and only if $f(x,y)=x^A$ for some $A$ in $\mathbb N_{>0}$; in this case, $\mathscr S_{f,O}=[\mu_A(\mathbb C)]$. If $\mathscr B\not=\mathscr B_1$ is a top bamboo, the term corresponding to $\mathscr B$ in the sum (\ref{eq:3.3}) vanishes in $\mathscr M_{\mathbb C}^{\hat{\mu}}$.
\end{remark}

\begin{proof}[Proof of Proposition \ref{prop:3.2}]
For a bamboo $\mathscr B$ in $\mathbf G$ we consider the principal part $f_{\mathscr B_{\s}}$ given in (\ref{GPP1}) and (\ref{GPPB}). Let $C_{\mathscr B_{\s},i,j}$ be the strict transform of the germ (not necessarily reduced) $(\{f_{\mathscr B_{\s}}=0\},O)$, in the toric modification corresponding to $\mathscr B$, which intersects with $E(P_{\mathscr B_{\s},i})$, $1\leq i\leq m_{\mathscr B_{\s}}$, $1\leq j\leq r_{\mathscr B_{\s},i}$. Let $\mathscr B^{i,j}$ be the successor of $\mathscr B$ in the set $\mathbf B$ of bamboos of $\mathbf G$ whose indices $(i,j)$ correspond to those of $C_{\mathscr B_{\s},i,j}$. We remark that, if $\mathscr B$ is not a top bamboo, then $m(C_{\mathscr B_{\s},i,j})=A_{\mathscr B_{\s},i,j}$ and 
\begin{equation}\label{rmk}
\gcd(m(E(P_{\mathscr B_{\s},i,})),A_{\mathscr B_{\s},i,j})=\gcd(m(E(P_{\mathscr B_{\s},i,})),m(E(Q_{\mathscr B_{\s}^{i,j},1}))). 
\end{equation}
The latter comes from the fact that there exists a natural number $N_{\mathscr B}$ such that 
$$m(E(Q_{\mathscr B_{\s}^{i,j},1}))=N_{\mathscr B}m(E(P_{\mathscr B_{\s},i,}))+A_{\mathscr B_{\s},i,j}.$$ 
For convenience, we define $[\mu_{\gcd(m(E(P_{\mathscr B_{\s},i})),m(E(Q_{\mathscr B_{\s}^{i,j},1})))}(\mathbb C)]=0$ if $\mathscr B$ a top bamboo. 

We now apply Theorem \ref{DLSf} to $f_{\mathscr B_{\s}}$, with (\ref{rmk}) used. If $\mathscr B=\mathscr B_1$, 
\begin{equation}\label{eq:3.5}
\begin{aligned}
\mathscr S_{f_{\mathscr B_{1,\s}},O}=&\sum_{j=1}^d[\widetilde E(Q_j)^{\circ}]-(\Lbb-1)\sum_{j=1}^{d-1}[\mu_{\gcd(m(E(Q_j)),m(E(Q_{j+1}))}(\mathbb C)]\\
&-(\Lbb-1)\sum_{i=1}^m\sum_{j=1}^{r_i}[\mu_{\gcd(m(E(P_i)),m(E(Q_{\mathscr B_{1,\s}^{i,j},1})))}(\mathbb C)].
\end{aligned}
\end{equation}
Also, if $\mathscr B\not=\mathscr B_1$,
\begin{equation}\label{eq:3.6}
\begin{aligned}
\mathscr S_{f_{\mathscr B_{\s}},O}=&\sum_{j=1}^{d_{\mathscr B}}[\widetilde E(Q_{\mathscr B,j})^{\circ}]-(\Lbb-1)[\mu_{\gcd(m(P[\mathscr B_{\s}]),\sum_{t=1}^{m_{\mathscr B_{\s}}}a_{\mathscr B_{\s},t}A_{\mathscr B_{\s},t})}(\mathbb C)]\\
& -(\Lbb-1)\sum_{j=1}^{d_{\mathscr B}-1}[\mu_{\gcd(m(E(Q_{\mathscr B,j})),m(E(Q_{\mathscr B,j+1}))}(\mathbb C)]\\
& -(\Lbb-1)\sum_{i=1}^{m_{\mathscr B_{\s}}}\sum_{j=1}^{r_{\mathscr B_{\s},i}}[\mu_{\gcd(m(E(P_{\mathscr B_{\s},i,})),m(E(Q_{\mathscr B_{\s}^{i,j},1})))}(\mathbb C)].
\end{aligned}
\end{equation}
Thanks to \cite{DL,DL2}, in any case, $\mathscr S_{f_{\mathscr B_{\s}},O}$ is independent of the choice of the resolution of singularity, i.e., of $\mathscr B$. Now, using (\ref{eq:3.5}), (\ref{eq:3.6}) and (\ref{rmk}), the right hand side of (\ref{eq:3.3}) equals
\begin{equation*}
\begin{gathered}
\sum_{\mathscr B\in \mathbf B}\Big(\sum_{j=1}^{d_{\mathscr B}}[\widetilde E(Q_{\mathscr B,j})^{\circ}]-(\Lbb-1)\sum_{j=1}^{d_{\mathscr B}-1}[\mu_{\gcd(m(E(Q_{\mathscr B,j})),m(E(Q_{\mathscr B,j+1}))}(\mathbb C)]\\
-(\Lbb-1)\sum_{i=1}^{m_{\mathscr B_{\s}}}\sum_{j=1}^{r_{\mathscr B_{\s},i}}[\mu_{\gcd(m(E(P_{\mathscr B_{\s},i})),m(E(Q_{\mathscr B_{\s}^{i,j},1})))}(\mathbb C)]\Big)
\end{gathered}
\end{equation*}
which is nothing but $\mathscr S_{f,O}$, again by Theorem \ref{DLSf}.
\end{proof}

\section{The main result}\label{section4}

\subsection{Direct computations}\label{subsec:3.4}
In this paragraph, a direct computation of $\mathscr S_{f_{\mathscr B_{\s}}}$ for a bamboo $\mathscr B$ in $\mathbf G$ will be given. We define $m(P[\mathscr B_{1,\s}])=0$. For any bamboo $\mathscr B$, the face functions $f_{P_{\mathscr B_{\s},i}}(u,v)$ of the Newton polyhedron $\Gamma(f_{\mathscr B_{\s}})$ are equal to
$$\left(\prod_{t=1}^{i-1}\prod_{j=1}^{r_{\mathscr B,t}}\xi_{\mathscr B,t,j}^{A_{\mathscr B,t,j}}\right)v^{\sum_{t=i+1}^{m_{\mathscr B_{\s}}}a_{\mathscr B_{\s},t}A_{\mathscr B_{\s},t}}u^{m(P[\mathscr B_{\s}])+\sum_{t=1}^{i-1}b_{\mathscr B_{\s},t}A_{\mathscr B_{\s},t}}$$
times
$$\prod_{j=1}^{r_{\mathscr B_{\s},i}}(v^{a_{\mathscr B_{\s},i}}+\xi_{\mathscr B_{\s},i,j}u^{b_{\mathscr B_{\s},i}})^{A_{\mathscr B_{\s},i,j}}$$
for $1\leq i\leq m_{\mathscr B_{\s}}$, with convention $\sum_{t=1}^0=0$ and $\sum_{t=m_{\mathscr B_{\s}}+1}^{m_{\mathscr B_{\s}}}=0$. Let $P_{\mathscr B_{\s},i,i+1}$ be the intersection of two faces $P_{\mathscr B_{\s},i}$ and $P_{\mathscr B_{\s},i+1}$ for $0\leq i\leq m_{\mathscr B}$ (here $P_{\mathscr B_{\s},0,1}$ denotes the left end point of the face $P_{\mathscr B_{\s},1}$ and $P_{\mathscr B_{\s},m,m+1}$ denotes the right end point of the face $P_{\mathscr B_{\s},m}$). Then, using the previous convention,
\begin{align*}
f_{P_{\mathscr B_{\s},i,i+1}}(u,v)=\left(\prod_{t=1}^{i-1}\prod_{j=1}^{r_{\mathscr B,t}}\xi_{\mathscr B,t,j}^{A_{\mathscr B,t,j}}\right)v^{\sum_{t=i+1}^{m_{\mathscr B_{\s}}}a_{\mathscr B_{\s},t}A_{\mathscr B_{\s},t}}u^{m(P[\mathscr B_{\s}])+\sum_{t=1}^ib_{\mathscr B_{\s},t}A_{\mathscr B_{\s},t}}.
\end{align*}

In what follows, by $g^{-1}(1)$ we shall mean the variety $\{(u,v)\in\mathbb G_{m,\mathbb C}^2\mid g(u,v)=1\}$ for a polynomial $g$ in $\mathbb C[u,v]$.

\begin{proposition}\label{prop:3.3} 
For $\mathscr B=\mathscr B_1$,
\begin{align*}
\mathscr S_{f_{\mathscr B_{1,\s}},O}=&\sum_{i=1}^m[f_{P_i}^{-1}(1)]-\sum_{i=1}^{m-1}[f_{P_{i,i+1}}^{-1}(1)]-(\Lbb-1)\sum_{i=1}^m\sum_{j=1}^{r_i}[\mu_{A_{i,j}}(\mathbb C)]\\
&+[\mu_{m(Q^{\l})}(\mathbb C)]+[\mu_{m(Q^{\r})}(\mathbb C)].
\end{align*}
For $\mathscr B\not=\mathscr B_1$,
\begin{align*}
\mathscr S_{f_{\mathscr B_{\s}},O}=&\sum_{i=1}^{m_{\mathscr B_{\s}}}[f_{P_{\mathscr B_{\s},i}}^{-1}(1)]-\sum_{i=1}^{m_{\mathscr B_{\s}}-1}[f_{P_{\mathscr B_{\s},i,i+1}}^{-1}(1)]-(\Lbb-1)\sum_{i=1}^{m_{\mathscr B_{\s}}}\sum_{j=1}^{r_{\mathscr B_{\s},i}}[\mu_{A_{\mathscr B_{\s},i,j}}(\mathbb C)]\\
&-(\Lbb-1)[\mu_{\gcd(m(P[\mathscr B_{\s}]),\sum_{t=1}^{m_{\mathscr B_{\s}}}a_{\mathscr B_{\s},t}A_{\mathscr B_{\s},t})}(\mathbb C)]+[\mu_{m(Q_{\mathscr B_{\s}}^{\r})}(\mathbb C)].
\end{align*}
\end{proposition}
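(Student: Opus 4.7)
The plan is to apply Denef-Loeser's formula (Theorem \ref{DLSf}) directly to $f_{\mathscr B_{\s}}$, using the embedded toric resolution determined by the regular cone subdivision $Q_{\mathscr B,0},Q_{\mathscr B,1},\dots,Q_{\mathscr B,d_{\mathscr B}},Q_{\mathscr B,d_{\mathscr B}+1}$ of $\mathbb R_{\geq 0}^2$ that contains the primitive weight vectors $P_{\mathscr B_{\s},1},\dots,P_{\mathscr B_{\s},m_{\mathscr B_{\s}}}$. Because $f_{\mathscr B_{\s}}$ is by construction the $\Gamma$-principal part of itself, this toric modification is already a true embedded resolution: the exceptional divisors are the $E(Q_{\mathscr B,k})$ for $1\leq k\leq d_{\mathscr B}$, and the strict transforms are the curves $C_{\mathscr B_{\s},i,j}$, each of which meets $E(P_{\mathscr B_{\s},i})$ transversally at a single point with local intersection multiplicity $A_{\mathscr B_{\s},i,j}$.

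First I would classify every divisor and record its multiplicity using the formulas \eqref{eq:2.3}, \eqref{eq:2.4}, \eqref{eq:2.7}, \eqref{eq:2.8}. Then, for each exceptional $E(Q_{\mathscr B,k})^{\circ}$, I would describe the Denef-Loeser cyclic cover $\widetilde{E}(Q_{\mathscr B,k})^{\circ}$ case-by-case. When $Q_{\mathscr B,k}$ is an \emph{interpolating} ray (inserted to make the fan regular, not appearing among the $P_{\mathscr B_{\s},i}$), the curve $E(Q_{\mathscr B,k})^{\circ}$ is isomorphic to $\mathbb G_{m,\mathbb C}$ and the unit $\widetilde U$ along it is a nonzero constant, so the cover is trivialisable and its class reduces to a product $[\mathbb G_{m,\mathbb C}]\cdot[\mu_{m(Q_{\mathscr B,k})}(\mathbb C)]$ carrying the standard $\hat\mu$-action. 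When $Q_{\mathscr B,k}=P_{\mathscr B_{\s},i}$, I would identify $\widetilde{E}(P_{\mathscr B_{\s},i})^{\circ}$ with an open subvariety of $f_{P_{\mathscr B_{\s},i}}^{-1}(1)\subset\mathbb G_{m,\mathbb C}^2$ via the explicit toric change of variables that turns the cover equation $z^{m(P_{\mathscr B_{\s},i})}=\widetilde U(u,v)^{-1}$ into the face-function equation; the scissor complement simply omits the $r_{\mathscr B_{\s},i}$ intersection points with the $C_{\mathscr B_{\s},i,j}$ and the two intersection points with adjacent exceptional divisors.

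To assemble the stated formula I would then use three observations. First, each edge variety $f_{P_{\mathscr B_{\s},i,i+1}}^{-1}(1)$ is a \emph{monomial} locus in $\mathbb G_{m,\mathbb C}^2$, so its class equals $(\Lbb-1)[\mu_{\gcd(\alpha,\beta)}(\mathbb C)]$ where $(\alpha,\beta)$ are the exponents of $f_{P_{\mathscr B_{\s},i,i+1}}$; this is exactly the gcd that arises in Theorem \ref{DLSf} for the pair $(P_{\mathscr B_{\s},i},P_{\mathscr B_{\s},i+1})$. Second, the contributions of interpolating rays $Q_{\mathscr B,k}$ together with the intersection classes $(\Lbb-1)[\mu_{\gcd(m(Q_{\mathscr B,k}),m(Q_{\mathscr B,k+1}))}(\mathbb C)]$ telescope: adjacent rays of a regular fan satisfy $\det(Q_{\mathscr B,k},Q_{\mathscr B,k+1})=1$, which forces $\gcd(m(Q_{\mathscr B,k-1}),m(Q_{\mathscr B,k}))=\gcd(m(Q_{\mathscr B,k}),m(Q_{\mathscr B,k+1}))$ whenever $Q_{\mathscr B,k}$ is interpolating, so consecutive contributions cancel and only the boundary gcd's survive. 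Third, the transverse intersection $E(P_{\mathscr B_{\s},i})\cap C_{\mathscr B_{\s},i,j}$ produces the class $(\Lbb-1)[\mu_{A_{\mathscr B_{\s},i,j}}(\mathbb C)]$ that enters with a minus sign. Together these yield $\sum_i[f_{P_{\mathscr B_{\s},i}}^{-1}(1)] - \sum_i[f_{P_{\mathscr B_{\s},i,i+1}}^{-1}(1)] - (\Lbb-1)\sum_{i,j}[\mu_{A_{\mathscr B_{\s},i,j}}(\mathbb C)]$, plus the boundary contributions at the two ends of the bamboo: for $\mathscr B=\mathscr B_1$ both endpoints are created by the present toric modification and contribute $[\mu_{m(Q^{\l})}(\mathbb C)]+[\mu_{m(Q^{\r})}(\mathbb C)]$, whereas for $\mathscr B\neq\mathscr B_1$ the left endpoint is the predecessor $E(P[\mathscr B_{\s}])$ (not exceptional for the present modification), so its intersection with $E(Q_{\mathscr B_{\s}}^{\l})$ instead contributes the correction $-(\Lbb-1)[\mu_{\gcd(m(P[\mathscr B_{\s}]),\sum_t a_{\mathscr B_{\s},t}A_{\mathscr B_{\s},t})}(\mathbb C)]$ via \eqref{eq:2.8}.

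The main obstacle I anticipate is the telescoping bookkeeping in the final assembly: one has to verify that the long sum of interpolating-ray classes, monomial edge classes, and adjacent-pair gcd classes from Theorem \ref{DLSf} reduces precisely to the two boundary terms and no residual bulk terms. This rests on the combinatorial identity that $\gcd(m(Q_{\mathscr B,k}),m(Q_{\mathscr B,k+1}))$ is constant across each maximal run of interpolating rays between consecutive genuine faces $P_{\mathscr B_{\s},i}, P_{\mathscr B_{\s},i+1}$ and equals the gcd of the monomial exponents of $f_{P_{\mathscr B_{\s},i,i+1}}$, which in turn requires the explicit multiplicity formulas \eqref{eq:2.3}--\eqref{eq:2.8} together with the determinant-one condition of the regular subdivision.
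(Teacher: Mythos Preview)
Your strategy is \emph{genuinely different} from the paper's. The paper never applies Theorem~\ref{DLSf} to $f_{\mathscr B_{\s}}$ in order to prove Proposition~\ref{prop:3.3}; instead it computes the motivic zeta function $Z_{f_{\mathscr B_{\s}},O}^{\mot}(T)$ directly from the definition, partitioning the arc schemes $J_{\mathscr B_{\s},n}$ by the order vector $\Omega=(p,q)$ and by whether $n=\ell_{\mathscr B_{\s}}(\Omega)$ or $n>\ell_{\mathscr B_{\s}}(\Omega)$, then evaluates the two resulting series $\Phi_1,\Phi_2$ via Lemmas~\ref{lem:3.4} and \ref{lem:3.5} and takes $\lim_{T\to\infty}$. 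Your route---identify each $[\widetilde E(Q_{\mathscr B,k})^{\circ}]$ geometrically and telescope the chain of interpolating rays---is more elementary and avoids the arc-space combinatorics of \cite[\S2]{G}; the paper's route, by contrast, yields the face-function classes and the $\hat\mu$-action on them without ever having to match the Denef--Loeser covers to explicit toric models.

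There is, however, a concrete error in your treatment of the interpolating rays. In a toric chart adapted to the cone $(Q_{\mathscr B,k},Q_{\mathscr B,k+1})$ one has $f_{\mathscr B_{\s}}\circ\pi = u^{m(Q_{\mathscr B,k})}\,\widetilde U(u,v)$ with $\widetilde U(0,v)=c\,v^{m(Q_{\mathscr B,k+1})}$ for a nonzero constant $c$; the unit is \emph{not} constant along $E(Q_{\mathscr B,k})^{\circ}$. Consequently the cover $\{z^{m(Q_{\mathscr B,k})}\widetilde U=1\}$ is not the trivial $\mu_{m(Q_{\mathscr B,k})}$-cover of $\mathbb G_{m,\mathbb C}$, and its class in $\mathscr M_{\mathbb C}^{\hat\mu}$ is $(\Lbb-1)[\mu_{\gcd(m(Q_{\mathscr B,k}),m(Q_{\mathscr B,k+1}))}(\mathbb C)]$, not $(\Lbb-1)[\mu_{m(Q_{\mathscr B,k})}(\mathbb C)]$. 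Fortunately this correction is exactly what your telescoping needs: the continued-fraction relation $Q_{\mathscr B,k-1}+Q_{\mathscr B,k+1}\in\mathbb Z Q_{\mathscr B,k}$ forces the gcd to be constant along each run of interpolating rays, so the $s$ divisor classes cancel against $s$ of the $s{+}1$ edge classes, leaving precisely $-(\Lbb-1)[\mu_d(\mathbb C)]=-[f_{P_{\mathscr B_{\s},i,i+1}}^{-1}(1)]$. You should also make explicit that this equality holds in $\mathscr M_{\mathbb C}^{\hat\mu}$ (i.e.\ equivariantly), and likewise justify carefully the equivariant identification of $\widetilde E(P_{\mathscr B_{\s},i})^{\circ}$ with the open locus in $f_{P_{\mathscr B_{\s},i}}^{-1}(1)$; both points are standard for Newton-nondegenerate data but are the places where a referee would ask for detail.
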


\begin{proof}
Define a function $\ell_{\mathscr B_{\s}}:\mathbb R_{\geq 0}^2\to \mathbb R_{\geq 0}$ by setting $\ell(p,q)=\min_{(a,b)\in\Gamma_{\mathscr B_{\s}}}(ap+bq)$, where $\Gamma_{\mathscr B_{\s}}$ denotes $\Gamma(f_{\mathscr B_{\s}})$, for each bamboo $\mathscr B$ in $\mathbf G$. For $\Omega=(p,q)$ in $\mathbb{R}_{\geq 0}^2$, let $\gamma_{\Omega}$ be the face consisting of points $(a,b)$ in $\Gamma_{\mathscr B_{\s}}$ with $ap+bq=\ell_{\mathscr B_{\s}}(\Omega)$. For a face $P$, let $\sigma(P)$ be the sets of points $\Omega$ of $\mathbb{R}_{\geq 0}^2$ with $\gamma_{\Omega}=P$; thus the closure $\overline{\sigma}(P)$ is the sets of points $\Omega$ in $\mathbb{R}_{\geq 0}^2$ with $\gamma_{\Omega}\supset P$. When $P$ runs over the faces of $\Gamma_{\mathscr B_{\s}}$, $\overline{\sigma}(P)$ form a fan in $\mathbb{R}_{\geq 0}^2$ partitioning it into rational polyhedral cones (see \cite{Thuong1}).

Let $J_{\mathscr B_{\s},n}$ be the scheme defined similarly as $J_n$ of (\ref{eq:3.1}) with $f$ replaced by $f_{\mathscr B_{\s}}$, that is, $J_{\mathscr B_{\s},n}:=J_n(f_{\mathscr B_{\s}})$. For $(p,q)$ in $\mathbb{N}_{>0}^2$, let $J_{\mathscr B_{\s},n}(p,q)$ be the set of $(\varphi,\psi)$ in $J_{\mathscr B_{\s},n}$ such that there exists $(\varphi',\psi')$ in $\mathbb C[[t]]^2$ of order $(p,q)$ with $\varphi=\varphi' \md t^{n+1}$ and $\psi=\psi' \md t^{n+1}$. It gives rise in a natural way to an element $[J_{\mathscr B_{\s},n}(p,q)]$ of $\mathscr M_{\mathbb C}^{\hat{\mu}}$. Since $J_{\mathscr B_{\s},n}$ is a disjoint union of such these sets and $n\geq \ell_{\mathscr B_{\s}}(\Omega)$ for any $\Omega$ in $\mathbb N_{>0}^2$, we have
\begin{align}\label{eq:3.9}
Z_{f_{\mathscr B_{\s}},O}^{\mot}(T)=\sum_{\Omega\in\mathbb{N}_{>0}^2}(\Phi_1(T)+\Phi_2(T)),
\end{align}
where 
\begin{equation*}
\begin{cases}
\Phi_1(T)=[J_{\mathscr B_{\s},\ell_{\mathscr B_{\s}}(\Omega)}(\Omega)]\Lbb^{-2\ell_{\mathscr B_{\s}}(\Omega)}T^{\ell_{\mathscr B_{\s}}(\Omega)}\\
\Phi_2(T)=\sum_{n>\ell_{\mathscr B_{\s}}(\Omega)}[J_{\mathscr B_{\s},n}(\Omega)]\Lbb^{-2n}T^n.
\end{cases}
\end{equation*}
Now, by writing (\ref{eq:3.9}) as  
\begin{align*}
\sum_{\Omega\in\mathbb{N}^2}=\sum_{i=1}^{m_{\mathscr B_{\s}}}\sum_{\Omega\in\sigma(P_{\mathscr B_{\s},i})}+\sum_{i=0}^{m_{\mathscr B_{\s}}}\sum_{\Omega\in\sigma(P_{\mathscr B_{\s},i,i+1})}.
\end{align*}
and by using Lemmas \ref{lem:3.4} and \ref{lem:3.5}, we complete the proof of the proposition.
\end{proof}

\begin{lemma}\label{lem:3.4} 
The following identities hold:
\begin{align*}
\lim_{T\to\infty}\sum_{\Omega\in\sigma(P_{\mathscr B_{\s},i})}\Phi_1(T)&=-[f_{P_{\mathscr B_{\s},i}}^{-1}(1)], \quad 1\leq i\leq m_{\mathscr B_{s}},\\
\lim_{T\to\infty}\sum_{\Omega\in\sigma(P_{\mathscr B_{\s},i,i+1})}\Phi_1(T)&=
\begin{cases}
-[\mu_{m(Q^{\l})}(\mathbb C)], &\mathscr B_{\s}=\mathscr B_{1,\s}, i=0,\\
[f_{P_{\mathscr B_{\s},0,1}}^{-1}(1)], &\mathscr B_{\s}\not=\mathscr B_{1,\s}, i=0,\\
[f_{P_{\mathscr B_{\s},i,i+1}}^{-1}(1)], &1\leq i< m_{\mathscr B_{s}},\\
-[\mu_{m(Q_{\mathscr B_{\s}}^{\r})}(\mathbb C)], &i=m_{\mathscr B_{s}}.
\end{cases}
\end{align*}
In particular, for $\mathscr B_{\s}\not=\mathscr B_{1,\s}$, 
\begin{align*}
[f_{P_{\mathscr B_{\s},0,1}}^{-1}(1)]&=(\Lbb-1)[\mu_{\gcd(m(P[\mathscr B_{\s}]),\sum_{t=1}^{m_{\mathscr B_{\s}}}a_{\mathscr B_{\s},t}A_{\mathscr B_{\s},t})}(\mathbb C)].
\end{align*}
\end{lemma}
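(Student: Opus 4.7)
The plan is to turn each sum on the left-hand side into an explicit rational function in $T$ (with coefficients in $\mathscr M_{\mathbb C}^{\hat\mu}$) and then apply $\lim_{T\to\infty}$. The starting point is to identify the class $[J_{\mathscr B_{\s},n}(\Omega)]$ for $\Omega=(p,q)\in\sigma(\gamma)$ with $\gamma$ a face of $\Gamma_{\mathscr B_{\s}}$. For $(\varphi,\psi)$ of orders $(p,q)$ with leading coefficients $(u_0,v_0)\in\mathbb G_m^2$, the coefficient of $t^n$ in $f_{\mathscr B_{\s}}(\varphi,\psi)$ is the sum over monomials of $f_{\mathscr B_{\s}}$ lying on $\gamma$; so the condition $f_{\mathscr B_{\s}}(\varphi,\psi)\equiv t^n \pmod{t^{n+1}}$ reduces to $f_{\gamma}(u_0,v_0)=1$, and the higher coefficients of $\varphi,\psi$ are unconstrained. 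This gives
\[
[J_{\mathscr B_{\s},n}(\Omega)] = [f_{\gamma}^{-1}(1)]\,\Lbb^{2n-p-q}, \qquad \Phi_1(T) = [f_{\gamma}^{-1}(1)]\,\Lbb^{-p-q}\,T^{\ell_{\mathscr B_{\s}}(\Omega)}.
\]

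For the edge case $\gamma=P_{\mathscr B_{\s},i}$, the cone $\sigma(\gamma)$ is the open ray spanned by the primitive vector $P_{\mathscr B_{\s},i}=(a,b)$, with integer points $k(a,b)$, $k\geq 1$. The sum collapses to a single geometric series
\[
[f_{P_{\mathscr B_{\s},i}}^{-1}(1)]\cdot\frac{\Lbb^{-(a+b)}T^{N}}{1-\Lbb^{-(a+b)}T^{N}},
\]
with $N=\ell_{\mathscr B_{\s}}(P_{\mathscr B_{\s},i})>0$, whose $T\to\infty$ limit is $-[f_{P_{\mathscr B_{\s},i}}^{-1}(1)]$, proving the first identity.

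For the vertex case $\gamma=P_{\mathscr B_{\s},i,i+1}$, the cone $\sigma(\gamma)$ is $2$-dimensional and usually not regular, so I would use the regular simplicial subdivision $Q_{\mathscr B_{\s},j}$ of Section \ref{sec2} restricted to $\sigma(\gamma)$; say it splits into $k$ open regular $2$-cones $\sigma(Q_j,Q_{j+1})$ with $\det(Q_j,Q_{j+1})=1$ and $k-1$ interior rays $\sigma(Q_j)$. On each $2$-cone the lattice points in the interior are $aQ_j+bQ_{j+1}$ with $a,b\geq 1$, giving a product of two geometric series; on each interior ray a single series. When $\gamma$ has both coordinates strictly positive (i.e.\ $1\leq i<m_{\mathscr B_{\s}}$, or $i=0$ with $\mathscr B_{\s}\not=\mathscr B_{1,\s}$), every $\ell_{\mathscr B_{\s}}(Q_j)$ is positive and each factor tends to $-1$, so the $2$-cones contribute $+[f_{\gamma}^{-1}(1)]$ each and the interior rays $-[f_{\gamma}^{-1}(1)]$ each, telescoping to $k[f_{\gamma}^{-1}(1)]-(k-1)[f_{\gamma}^{-1}(1)]=[f_{\gamma}^{-1}(1)]$.

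The main obstacle, and the reason for the case distinction, is the axis-vertex case, namely $i=0$ or $m_{\mathscr B_{\s}}$ for $\mathscr B_{\s}=\mathscr B_{1,\s}$ and $i=m_{\mathscr B_{\s}}$ in general. There one bounding ray of $\sigma(\gamma)$ is the coordinate direction $Q_0=(1,0)^t$ or $Q_{d_{\mathscr B}+1}=(0,1)^t$, on which $\ell_{\mathscr B_{\s}}$ vanishes. The corresponding geometric-series factor in the outermost regular $2$-cone degenerates to the $T$-independent constant $\Lbb^{-1}/(1-\Lbb^{-1})$, and the $T\to\infty$ limit of that outermost contribution becomes $[f_{\gamma}^{-1}(1)]\cdot(-\Lbb^{-1})/(1-\Lbb^{-1})=-[f_{\gamma}^{-1}(1)]/(\Lbb-1)$ instead of $+[f_{\gamma}^{-1}(1)]$. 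Since on the axis $\gamma$ is a single monomial (for example $c\,v^{m(Q^{\l})}$ on the $y$-axis of $\Gamma_{\mathscr B_{1,\s}}$), a direct variety computation gives $[f_{\gamma}^{-1}(1)]=(\Lbb-1)[\mu_{m(Q^{\l})}(\mathbb C)]$, so the outermost piece collapses to $-[\mu_{m(Q^{\l})}(\mathbb C)]$, while the remaining interior $2$-cones and interior rays cancel in pairs as in the previous paragraph. The final ``in particular'' statement is an instance of the same monomial computation: $\{c\,u^{A}v^{B}=1\}\subset\mathbb G_m^2$ fibers over $\mathbb G_m$ into $\gcd(A,B)$ disjoint copies of $\mathbb G_m$, so its class equals $(\Lbb-1)[\mu_{\gcd(A,B)}(\mathbb C)]$, with $A=m(P[\mathscr B_{\s}])$ and $B=\sum_t a_{\mathscr B_{\s},t}A_{\mathscr B_{\s},t}$.
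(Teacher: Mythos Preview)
Your treatment of the one-dimensional cones $\sigma(P_{\mathscr B_{\s},i})$ and of the interior two-dimensional cones $\sigma(P_{\mathscr B_{\s},i,i+1})$ (the cases $1\leq i<m_{\mathscr B_{\s}}$, and $i=0$ with $\mathscr B_{\s}\neq\mathscr B_{1,\s}$) is correct and is essentially an explicit unpacking of \cite[Lemma~2.1.5]{G}, which is exactly what the paper invokes; your subdivision-and-telescoping is just the two-dimensional proof of that lemma.

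The genuine gap is in the axis-vertex case. Your starting identity $[J_{\mathscr B_{\s},n}(\Omega)]=[f_\gamma^{-1}(1)]\,\Lbb^{2n-p-q}$ rests on the leading coefficients $(u_0,v_0)$ of $(\varphi,\psi)$ lying in $\mathbb G_m^2$, and that needs $p\leq n$ and $q\leq n$. In the axis cone $\sigma(P_{0,1})$ for $\mathscr B_{1,\s}$ one has $n=\ell(\Omega)=q\,m(Q^{\l})$, which does not bound $p$; the outermost regular sub-cone you single out is exactly the region where $p$ runs unboundedly past $n$. For those $\Omega$ the truncation $\varphi\in\mathbb C[t]/t^{n+1}$ is identically zero, there is no $u_0\in\mathbb G_m$, and your formula does not apply. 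A second symptom of the same problem is that your ``degenerate factor'' $\Lbb^{-1}/(1-\Lbb^{-1})=1/(\Lbb-1)$ is not an element of $\mathscr M_{\mathbb C}^{\hat\mu}$, so the partial sum you write down does not live in $\mathscr M_{\mathbb C}^{\hat\mu}[[T]]_{\sr}$ and $\lim_{T\to\infty}$ is not defined on it; the cancellation against the $(\Lbb-1)$ in $[f_\gamma^{-1}(1)]$ is therefore only formal.

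The paper avoids both issues by refusing to use the uniform formula on the axis cone. It splits according to $p\leq q\,m(Q^{\l})$ versus $p>q\,m(Q^{\l})$, computes $[J_{\ell(\Omega)}(\Omega)]$ separately in each range (in the second range the arc is $(0,\psi)$ and the class is $[\mu_{m(Q^{\l})}(\mathbb C)]\,\Lbb^{n-q}$, not anything involving $[f_\gamma^{-1}(1)]$), and then applies \cite[Lemma~2.10]{GLM} to kill the first piece in the limit while the second yields $-[\mu_{m(Q^{\l})}(\mathbb C)]$ via \cite[Lemma~3.3.3]{G}. Your final value is correct, but to make the argument rigorous you must either reproduce this splitting or else pass to a completion of $\mathscr M_{\mathbb C}^{\hat\mu}$ and justify that $\lim_{T\to\infty}$ extends there compatibly with multiplication by $(\Lbb-1)$.
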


\begin{proof}
Note that an element of $\sigma(P_{\mathscr B_{\s},i})$ has the form $\Omega=(p,q)=(\alpha a_{\mathscr B_{\s},i},\alpha b_{\mathscr B_{\s},i})$, $\alpha>0$, and for such an $\Omega$, $\ell_{\mathscr B_{\s}}(\Omega)=\alpha m(P_{\mathscr B_{\s},i})$. Thus $p<\ell_{\mathscr B_{\s}}(p,q)$ and $q<\ell_{\mathscr B_{\s}}(p,q)$ for any $(p,q)$ in $\mathbb N_{>0}^2$. For $(\varphi(t),\psi(t))$ in $J_{\mathscr B_{\s},\ell_{\mathscr B_{\s}}(\Omega)}(\Omega)$, we have 
$$f_{\mathscr B_{\s}}(\varphi(t),\psi(t))=f_{P_{\mathscr B_{\s},i}}(c_p,d_q)t^{\ell_{\mathscr B_{\s}}(\Omega)}+(\text{higher terms}),$$ 
with $(c_p,d_q)$ in $\mathbb G_{\mathbb C}^2$ the coefficients of $(t^p,t^q)$ in $(\varphi(t),\psi(t))$. It turns out that, in $\mathscr M_{\mathbb C}^{\hat{\mu}}$,
$$[J_{\mathscr B_{\s},\ell_{\mathscr B_{\s}}(\Omega)}(\Omega)]=[f_{P_{\mathscr B_{\s},i}}^{-1}(1)]\Lbb^{2\ell_{\mathscr B_{\s}}(\Omega)-|\Omega|},$$
where $|\Omega|=p+q$. By \cite[Lemma 2.1.5]{G},
\begin{align*}
\lim_{T\to\infty}\sum_{\Omega\in\sigma(P_{\mathscr B_{\s},i})}\Phi_1(T)=[f_{P_{\mathscr B_{\s},i}}^{-1}(1)]\lim_{T\to\infty}\sum_{\Omega\in\sigma(P_{\mathscr B_{\s},i})}\Lbb^{-|\Omega|}T^{\ell_{\mathscr B_{\s}}(\Omega)}=-[f_{P_{\mathscr B_{\s},i}}^{-1}(1)],
\end{align*}
the first identity is proved.

For $1\leq i< m_{\mathscr B_{s}}$, the cone $\sigma(P_{\mathscr B_{\s},i,i+1})$ consists of $\Omega=(\alpha a_{\mathscr B_{\s},i}+\beta a_{i+1},\alpha b_{\mathscr B_{\s},i}+\beta b_{\mathscr B_{\s},i+1})$ for $\alpha$ and $\beta$ in $\mathbb N_{>0}$, and 
$$\ell_{\mathscr B_{\s}}(\Omega)=\alpha m(P_{\mathscr B_{\s},i})+\beta m(P_{\mathscr B_{\s},i+1}).$$ 
Meanwhile, for $\mathscr B_{\s}\not=\mathscr B_{1,\s}$, the cone $\sigma(P_{\mathscr B_{\s},0,1})$ consists of $\Omega=(\alpha+\beta a_{\mathscr B_{\s},1},\beta b_{\mathscr B_{\s},1})$ for $\alpha$ and $\beta$ in $\mathbb N_{>0}$, and 
$$\ell_{\mathscr B_{\s}}(\Omega)=\alpha m(P[\mathscr B_{\s}])+\beta m(P_{\mathscr B_{\s},1}).$$ 
Then the results follow by using the same arguments as for the first identity. 

Now, we consider the case where $\mathscr B_{\s}=\mathscr B_{1,\s}$ and $i=0$, and rewrite for simplicity that $\ell=\ell_{\mathscr B_{1,\s}}$, $J_n^1=J_{\mathscr B_{1,\s},n}$. Then, for any $\Omega=(p,q)=(\alpha+\beta a_1,\beta b_1)$ in $\sigma(P_{0,1})$ ($\alpha,\beta$ in $\mathbb N_{>0}$), we have $\ell(\Omega)=qm(Q^{\l})$. If $p>qm(Q^{\l})$, an element of $J_{\ell(\Omega)}(\Omega)$ is of the form 
$$(0,\psi)=(0,d_qt^q+\cdots+d_{qm(Q^{\l})}t^{qm(Q^{\l})})$$
with $f(0,\psi)$ led by the term $d_q^{m(Q^{\l})}t^{qm(Q^{\l})}$, hence 
$$[J_{\ell(\Omega)}^1(\Omega)]=[\mu_{m(Q^{\l})}(\mathbb C)]\Lbb^{q(m(Q^{\l})-1)}.$$ 
If $p\leq qm(Q^{\l})$, $J_{\ell(\Omega)}^1(\Omega)$ consists of pairs $(\varphi,\psi)$ with $\ord_t\varphi=p$ and $\psi$ as previous, and the leading term of $f(\varphi,\psi)$ is $d_q^{m(Q^{\l})}t^{qm(Q^{\l})}$. It follows that 
$$[J_{\ell(\Omega)}^1(\Omega)]=[\mu_{m(Q^{\l})}(\mathbb C)]\Lbb^{2qm(Q^{\l})-(p+q)}.$$ 
Thus $\sum_{\Omega\in\sigma(P_{0,1})}\Phi_1(T)$ is the sum of the series
$$[\mu_{m(Q^{\l})}(\mathbb C)]\sum_{(p,q)\in\sigma(P_{0,1}), p> qm(Q^{\l})}\Lbb^{-q(m(Q^{\l})+1)}T^{qm(Q^{\l})}$$
and
$$
[\mu_{m(Q^{\l})}(\mathbb C)]\sum_{(p,q)\in\sigma(P_{0,1}), p\leq qm(Q^{\l})}\Lbb^{-(p+q)}T^{qm(Q^{\l})}.$$
Taking $\lim_{T\to\infty}$, the former has image $-[\mu_{m(Q^{\l})}(\mathbb C)]$ by a direct computation or due to the proof of \cite[Lemma 3.3.3]{G}, the latter has image zero by \cite[Lemma 2.10]{GLM}. 

The case where $i=m_{\mathscr B_{s}}$ is proved similarly.
\end{proof}

\begin{lemma}\label{lem:3.5} 
The following identities hold:
\begin{gather*}
\lim_{T\to\infty}\sum_{\Omega\in\sigma(P_{\mathscr B_{\s},i})}\Phi_2(T)=(\Lbb-1)\sum_{j=1}^{r_{\mathscr B_{\s},i}}[\mu_{A_{\mathscr B_{\s},i,j}}(\mathbb C)], \quad 1\leq i\leq m_{\mathscr B_{\s}},\\
\lim_{T\to\infty}\sum_{\Omega\in\sigma(P_{\mathscr B_{\s},i,i+1})}\Phi_2(T)=0, \quad 0\leq i\leq m_{\mathscr B_{\s}}.
\end{gather*}
\end{lemma}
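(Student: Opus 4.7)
My plan is to analyze the stratum $J_{\mathscr B_{\s},n}(\Omega)$ with $n>\ell_{\mathscr B_{\s}}(\Omega)$ separately on the 2-dimensional edge cones $\sigma(P_{\mathscr B_{\s},i,i+1})$ and the 1-dimensional face cones $\sigma(P_{\mathscr B_{\s},i})$, and then evaluate $\lim_{T\to\infty}\Phi_2$ via the rationality framework of \cite{G, GLM} already used in the proof of Lemma~\ref{lem:3.4}.

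For the edge identity, I observe that $f_{P_{\mathscr B_{\s},i,i+1}}(u,v)$ is a single monomial in $(u,v)$ with a nonzero scalar coefficient. If $(\varphi,\psi)$ admits an extension of order $(p,q)=\Omega\in\sigma(P_{\mathscr B_{\s},i,i+1})$ with $p,q\leq n$, the $t^{\ell_{\mathscr B_{\s}}(\Omega)}$-coefficient of $f_{\mathscr B_{\s}}(\varphi,\psi)$ equals $f_{P_{\mathscr B_{\s},i,i+1}}(c_p,d_q)\neq 0$ since $(c_p,d_q)\in\mathbb G_{m,\mathbb C}^2$. Hence $f_{\mathscr B_{\s}}(\varphi,\psi)\equiv t^n\pmod{t^{n+1}}$ with $n>\ell_{\mathscr B_{\s}}(\Omega)$ is impossible. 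At the boundary indices $i=0$ or $i=m_{\mathscr B_{\s}}$, one must additionally handle the sub-strata where $\varphi$ or $\psi$ vanishes modulo $t^{n+1}$ (as in Lemma~\ref{lem:3.4}); on these sub-strata $f_{\mathscr B_{\s}}$ restricts to a pure power of a single variable of order exactly $\ell_{\mathscr B_{\s}}(\Omega)$, again forbidding $n>\ell_{\mathscr B_{\s}}(\Omega)$. Therefore $J_{\mathscr B_{\s},n}(\Omega)=\emptyset$ and $\Phi_2\equiv 0$ on every edge cone.

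For the face identity with $\Omega=\alpha P_{\mathscr B_{\s},i}$ and $n>\ell_{\mathscr B_{\s}}(\Omega)=\alpha m(P_{\mathscr B_{\s},i})$, non-emptiness of $J_{\mathscr B_{\s},n}(\Omega)$ forces $f_{P_{\mathscr B_{\s},i}}(c_p,d_q)=0$. Since the monomial prefactor is nonvanishing on $\mathbb G_{m,\mathbb C}^2$ and the $\xi_{\mathscr B_{\s},i,j}$'s are pairwise distinct, the zero locus decomposes as the disjoint union of the branches $C_{i,j}:=\{v^{a_{\mathscr B_{\s},i}}+\xi_{\mathscr B_{\s},i,j}u^{b_{\mathscr B_{\s},i}}=0\}\cap\mathbb G_{m,\mathbb C}^2$, each isomorphic to $\mathbb G_{m,\mathbb C}$ (as $(a_{\mathscr B_{\s},i},b_{\mathscr B_{\s},i})$ is primitive) and hence of class $\Lbb-1$. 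I would stratify $J_{\mathscr B_{\s},n}(\Omega)$ by the branch $C_{i,j}$ containing $(c_p,d_q)$, and perform on each stratum a local change of variables reducing the offending factor $(\psi^{a_{\mathscr B_{\s},i}}+\xi_{\mathscr B_{\s},i,j}\varphi^{b_{\mathscr B_{\s},i}})^{A_{\mathscr B_{\s},i,j}}$ to a normal form $w^{A_{\mathscr B_{\s},i,j}}$, while the other factors and the monomial prefactor restrict to invertible units along $C_{i,j}$ and contribute only $\Lbb$-powers.

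Summing the resulting motivic expression over the order of vanishing of the offending factor yields a rational series whose $T\to\infty$ limit, computed by applying the basic identity $\lim_{T\to\infty}\Lbb^pT^q/(1-\Lbb^pT^q)=-1$ twice (once for the $\alpha$-sum parameterizing $\sigma(P_{\mathscr B_{\s},i})$, once for the vanishing-order sum along $C_{i,j}$), equals $(\Lbb-1)[\mu_{A_{\mathscr B_{\s},i,j}}(\mathbb C)]$ per branch, with the $\hat\mu$-action being the cyclic monodromy of $w^{A_{\mathscr B_{\s},i,j}}=1$. Summing over $j=1,\ldots,r_{\mathscr B_{\s},i}$ gives the first identity. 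The main technical obstacle will be carrying out the normalization rigorously at the level of truncated arc schemes, tracking the $\hat\mu$-action across the change of variables, and handling the higher-vanishing sub-strata by a case analysis analogous to the one appearing in Lemma~\ref{lem:3.4}.
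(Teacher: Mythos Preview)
Your proposal is correct and follows essentially the same route as the paper. For the edge cones you give exactly the paper's reasoning (the face function is a monomial, so the leading coefficient cannot vanish and $J_{\mathscr B_{\s},n}(\Omega)=\emptyset$ for $n>\ell_{\mathscr B_{\s}}(\Omega)$); for the face cones you stratify by the branch $C_{i,j}$ containing the leading coefficient, strip off the unit factors, and reduce to a single power---precisely what the paper does via the projection $\rho$ and the decomposition into $J_{\mathscr B_{\s},n}^{(j)}(\Omega)$. The only cosmetic difference is that where you propose a direct normalization of the offending factor to $w^{A_{\mathscr B_{\s},i,j}}$, the paper instead reduces to the arc space of $(y^{a_{\mathscr B_{\s},i}}+\xi_{\mathscr B_{\s},i,j}x^{b_{\mathscr B_{\s},i}})^{A_{\mathscr B_{\s},i,j}}$ and then invokes the non-degeneracy of $y^a+\xi x^b$ together with \cite[Lemma 2.1.1]{G} to evaluate $[J_\lambda(y^a+\xi x^b)(\Omega)]=(\Lbb-1)\Lbb^{2\lambda-|\Omega|}$; this absorbs both your ``normalization'' step and the divisibility constraint (the class vanishes unless $A_{\mathscr B_{\s},i,j}$ divides $k$), and makes the $\hat\mu$-equivariance automatic. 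The final double application of $\lim_{T\to\infty}$ and \cite[Lemma 2.1.5]{G} is identical in both arguments.
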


\begin{proof}
We only need to write down a proof for the first identity because the second one is trivial. For each $\Omega$ in $\sigma(P_{\mathscr B_{\s},i})\cap\mathbb Z^2$ and each $n=\ell_{\mathscr B_{\s}}(\Omega)+k$, $k\geq 1$, let us consider the projection 
$$\rho: J_{\mathscr B_{\s},n}(\Omega)\to f_{\mathscr B_{\s},i}^{-1}(0)$$ 
which sends $(\varphi(t),\psi(t))$ to their leading nonzero coefficients. Defining $J_{\mathscr B_{\s},n}^{(j)}(\Omega)$ to be 
$$\rho^{-1}\left(\left\{(x,y)\in\mathbb G_{\mathbb C}^2\mid y^{a_{\mathscr B_{\s},i}}+\xi_{\mathscr B_{\s},i,j}x^{b_{\mathscr B_{\s},i}}=0, y^{a_{\mathscr B_{\s},i}}+\xi_{\mathscr B_{\s},i,h}x^{b_{\mathscr B_{\s},i}}\not=0\ \forall h\not=j\right\}\right),$$ 
we can decompose $J_{\mathscr B_{\s},n}(\Omega)$ into a disjoint union of subsets $J_{\mathscr B_{\s},n}^{(j)}(\Omega)$ for all $1\leq j\leq r_{\mathscr B_{\s},i}$. Recall that, since $\Omega\in\sigma(P_{\mathscr B_{\s},i})\cap\mathbb Z^2$, it has the form $(\alpha a_{\mathscr B_{\s},i},\alpha b_{\mathscr B_{\s},i})$ for some $\alpha\in \mathbb N_{>0}$, and $\ell_{\mathscr B_{\s}}(\Omega)=\alpha m(P_{\mathscr B_{\s},i})$. Thus, for such an $\Omega$ and $k\geq 1$, the class $[J_{\mathscr B_{\s},\ell_{\mathscr B_{\s}}(\Omega)+k}^{(j)}(\Omega)]$ equals 
\begin{align*}
[J_{\alpha a_{\mathscr B_{\s},i}b_{\mathscr B_{\s},i}A_{\mathscr B_{\s},i,j}+k}((y^{a_{\mathscr B_{\s},i}}+\xi_{\mathscr B_{\s},i,j}x^{b_{\mathscr B_{\s},i}})^{A_{\mathscr B_{\s},i,j}})(\Omega)]\Lbb^{2\left(\ell_{\mathscr B_{\s}}(\Omega)-\alpha a_{\mathscr B_{\s},i}b_{\mathscr B_{\s},i}A_{\mathscr B_{\s},i,j}\right)}.
\end{align*}
By definition, we have
$$J_{a_{\mathscr B_{\s},i}b_{\mathscr B_{\s},i}A_{\mathscr B_{\s},i,j}+k}((y^{a_{\mathscr B_{\s},i}}+\xi_{\mathscr B_{\s},i,j}x^{b_{\mathscr B_{\s},i}})^{A_{\mathscr B_{\s},i,j}})(\Omega)=\emptyset$$ 
if $A_{\mathscr B_{\s},i,j}$ does not divide $\alpha a_{\mathscr B_{\s},i}b_{\mathscr B_{\s},i}A_{\mathscr B_{\s},i,j}+k$. Otherwise, we put 
$$\alpha a_{\mathscr B_{\s},i}b_{\mathscr B_{\s},i}A_{\mathscr B_{\s},i,j}+k=\lambda A_{\mathscr B_{\s},i,j}.$$ 
Since $y^{a_{\mathscr B_{\s},i}}+\xi_{\mathscr B_{\s},i,j}x^{b_{\mathscr B_{\s},i}}$ is non-degenerate with respect to its Newton polyhedron, we may use the argument in the proof of \cite[Lemma 2.1.1]{G} and obtain 
\begin{align*}
[J_{\lambda}(y^{a_{\mathscr B_{\s},i}}+\xi_{\mathscr B_{\s},i,j}x^{b_{\mathscr B_{\s},i}})(\Omega)]
=(\Lbb-1)\Lbb^{2\lambda-|\Omega|}.
\end{align*}
It implies that, for $n=\ell_{\mathscr B_{\s}}(\Omega)+k=\ell_{\mathscr B_{\s}}(\Omega)+(\lambda-\alpha a_{\mathscr B_{\s},i}b_{\mathscr B_{\s},i})A_{\mathscr B_{\s},i,j}$ and $\lambda>\alpha a_{\mathscr B_{\s},i}b_{\mathscr B_{\s},i}$, we have
\begin{align*}
[J_{\mathscr B_{\s},n}^{(j)}(\Omega)]=(\Lbb-1)[\mu_{A_{\mathscr B_{\s},i,j}}(\mathbb C)]\Lbb^{2\left(\ell_{\mathscr B_{\s}}(\Omega)-\alpha a_{\mathscr B_{\s},i}b_{\mathscr B_{\s},i}A_{\mathscr B_{\s},i,j}\right)+\left(2\lambda-|\Omega|\right)A_{\mathscr B_{\s},i,j}}.
\end{align*}
Then 
\begin{align*}
&\sum_{\Omega\in\sigma(P_{\mathscr B_{\s},i})}\sum_{n>\ell_{\mathscr B_{\s}}(\Omega)}[J_{\mathscr B_{\s},n}^{(j)}(\Omega)]\Lbb^{-2n}T^n\\
&=(\Lbb-1)[\mu_{A_{\mathscr B_{\s},i,j}}(\mathbb C)]\sum_{\Omega\in\sigma(P_{\mathscr B_{\s},i})}\Lbb^{-|\Omega|A_{\mathscr B_{\s},i,j}}T^{\ell_{\mathscr B_{\s}}(\Omega)}\sum_{\lambda>a_{\mathscr B_{\s},i}b_{\mathscr B_{\s},i}}T^{(\lambda-a_{\mathscr B_{\s},i}b_{\mathscr B_{\s},i})A_{\mathscr B_{\s},i,j}}\\
&=(\Lbb-1)[\mu_{A_{\mathscr B_{\s},i,j}}(\mathbb C)]\frac{T^{A_{\mathscr B_{\s},i,j}}}{1-T^{A_{\mathscr B_{\s},i,j}}}\sum_{\Omega\in\sigma(P_{\mathscr B_{\s},i})}\Lbb^{-|\Omega|A_{\mathscr B_{\s},i,j}}T^{\ell_{\mathscr B_{\s}}(\Omega)}.
\end{align*}
Now it follows from \cite[Lemma 2.1.5]{G} that 
$$\lim_{T\to\infty}\sum_{\Omega\in\sigma(P_{\mathscr B_{\s},i})}\sum_{n>\ell_{\mathscr B_{\s}}(\Omega)}[J_{\mathscr B_{\s},n}^{(j)}(\Omega)]\Lbb^{-2n}T^n=(\Lbb-1)[\mu_{A_{\mathscr B_{\s},i,j}}(\mathbb C)],$$
which proves the lemma.
\end{proof}


\subsection{Description of $\mathscr S_{f,O}$ via $\mathbf G_{\s}$}
Let $\mathbf B_{\s}$ denote the set of bamboos of $\mathbf G_{\s}$. Let us recall the face functions corresponding to each bamboo $\mathscr B$ in $\mathbf B_{\s}$ as in Subsection \ref{subsec:3.4}, up to factor in $\mathbb G_{m,\mathbb C}$:
$$f_{P_{\mathscr B,i}}(u,v)=v^{\sum_{t=i+1}^{m_{\mathscr B}}a_{\mathscr B,t}A_{\mathscr B,t}}u^{m(P[\mathscr B])+\sum_{t=1}^{i-1}b_{\mathscr B,t}A_{\mathscr B,t}}\prod_{j=1}^{r_{\mathscr B,i}}(v^{a_{\mathscr B,i}}+\xi_{\mathscr B,i,j}u^{b_{\mathscr B,i}})^{A_{\mathscr B,i,j}}$$
for $1\leq i\leq m_{\mathscr B}$, and
$$f_{P_{\mathscr B,i,i+1}}(u,v)=v^{\sum_{t=i+1}^{m_{\mathscr B}}a_{\mathscr B,t}A_{\mathscr B,t}}u^{m(P[\mathscr B])+\sum_{t=1}^ib_{\mathscr B,t}A_{\mathscr B,t}}$$
for $0\leq i\leq m_{\mathscr B}$. Here $m(P[\mathscr B])$ is the multiplicity of $E(P[\mathscr B])$, the predecessor of $E(P_{\mathscr B,1})$ in $\mathbf G_{\s}$, $m(P[\mathscr B])=0$ for $\mathscr B$ being the first bamboo of $\mathbf G_{\s}$, and, by convention, $\sum_{t=1}^0=0$, $\sum_{t=m_{\mathscr B}+1}^{m_{\mathscr B}}=0$. To each $\mathscr B\in \mathbf B_{\s}$ we associate varieties 
$$
X(\mathscr B,i):=\{(u,v)\in\mathbb A_{\mathbb C}^2\mid f_{P_{\mathscr B,i}}(u,v)=1\},$$
for $1\leq i\leq m_{\mathscr B}$, and 
$$
X(\mathscr B,i,i+1):=\{(u,v)\in\mathbb A_{\mathbb C}^2\mid f_{P_{\mathscr B,i,i+1}}(u,v)=1\},$$
for $1\leq i< m_{\mathscr B}$.

\begin{theorem}\label{main1}
The motivic Milnor fiber $\mathscr S_{f,O}$ is expressed via $\mathbf G_{\s}$ as follows
\begin{align*}
\mathscr S_{f,O}=\sum_{\mathscr B\in\mathbf B_{\s}}\left(\sum_{i=1}^{m_{\mathscr B}}[X(\mathscr B,i)]-\sum_{i=1}^{m_{\mathscr B}-1}[X(\mathscr B,i,i+1)]-(\Lbb-1)\sum_{i=1}^{m_{\mathscr B}}\sum_{j=1}^{r_{\mathscr B,i}}[\mu_{A_{\mathscr B,i,j}}(\mathbb C)]\right).
\end{align*}
\end{theorem}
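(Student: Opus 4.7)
The plan is to derive Theorem \ref{main1} by combining Propositions \ref{prop:3.2} and \ref{prop:3.3} and reconciling the two types of classes that appear: the varieties $f_{P_{\mathscr B_{\s},i}}^{-1}(1)$ and $f_{P_{\mathscr B_{\s},i,i+1}}^{-1}(1)$ of Proposition \ref{prop:3.3} (defined in $\mathbb G_{m,\mathbb C}^2$, following the convention of Subsection \ref{subsec:3.4}) with the varieties $X(\mathscr B,i)$, $X(\mathscr B,i,i+1)$ of the theorem (defined in $\mathbb A_{\mathbb C}^2$). The discrepancy lies entirely in points on the coordinate axes, and these axis contributions will be shown to cancel the boundary summands $[\mu_{m(Q^{\l})}(\mathbb C)]$, $[\mu_{m(Q^{\r})}(\mathbb C)]$ and $[\mu_{m(Q_{\mathscr B_{\s}}^{\r})}(\mathbb C)]$ that appear in Proposition \ref{prop:3.3}.

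First, for each bamboo $\mathscr B$ and $1\leq i\leq m_{\mathscr B}$, I would analyse $[X(\mathscr B,i)]-[f_{P_{\mathscr B_{\s},i}}^{-1}(1)]$ by writing $f_{P_{\mathscr B,i}}(u,v)=c\cdot v^{\alpha}u^{\beta}\prod_{j}(v^{a_{\mathscr B,i}}+\xi_{\mathscr B,i,j}u^{b_{\mathscr B,i}})^{A_{\mathscr B,i,j}}$, with $c\in\mathbb G_{m,\mathbb C}$, $\alpha=\sum_{t=i+1}^{m_{\mathscr B}}a_{\mathscr B,t}A_{\mathscr B,t}$ and $\beta=m(P[\mathscr B])+\sum_{t=1}^{i-1}b_{\mathscr B,t}A_{\mathscr B,t}$. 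Solutions with $v=0$ exist only when $\alpha=0$, i.e.\ when $i=m_{\mathscr B}$; the equation then reduces to $u^{\beta+b_{\mathscr B,i}A_{\mathscr B,i}}=\mathrm{const}\in\mathbb G_{m,\mathbb C}$ and, by (\ref{eq:2.8}) (or (\ref{eq:2.3}) when $\mathscr B=\mathscr B_1$), this exponent equals $m(Q_{\mathscr B_{\s}}^{\r})$, giving a contribution $[\mu_{m(Q_{\mathscr B_{\s}}^{\r})}(\mathbb C)]$. Symmetrically, solutions with $u=0$ require $\beta=0$, which forces $\mathscr B=\mathscr B_1$ and $i=1$, contributing $[\mu_{m(Q^{\l})}(\mathbb C)]$ by (\ref{eq:2.3}). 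On interior edges $1\leq i<m_{\mathscr B}$ the monomial $f_{P_{\mathscr B_{\s},i,i+1}}$ has both exponents strictly positive, so $[X(\mathscr B,i,i+1)]=[f_{P_{\mathscr B_{\s},i,i+1}}^{-1}(1)]$.

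Substituting these identities into Proposition \ref{prop:3.3}, the explicit summands $[\mu_{m(Q^{\l})}(\mathbb C)]+[\mu_{m(Q^{\r})}(\mathbb C)]$ (for $\mathscr B_1$) and $[\mu_{m(Q_{\mathscr B_{\s}}^{\r})}(\mathbb C)]$ (for $\mathscr B\neq\mathscr B_1$) are exactly compensated by the axis contributions now absorbed into $[X(\mathscr B,i)]$, yielding
\begin{align*}
\mathscr S_{f_{\mathscr B_{\s}},O}=\sum_{i=1}^{m_{\mathscr B}}[X(\mathscr B,i)]&-\sum_{i=1}^{m_{\mathscr B}-1}[X(\mathscr B,i,i+1)]-(\Lbb-1)\sum_{i=1}^{m_{\mathscr B}}\sum_{j=1}^{r_{\mathscr B,i}}[\mu_{A_{\mathscr B,i,j}}(\mathbb C)]\\
&-\delta_{\mathscr B}\,(\Lbb-1)\bigl[\mu_{\gcd(m(P[\mathscr B_{\s}]),\sum_{t}a_{\mathscr B_{\s},t}A_{\mathscr B_{\s},t})}(\mathbb C)\bigr],
\end{align*}
where $\delta_{\mathscr B_1}=0$ and $\delta_{\mathscr B}=1$ otherwise. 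Feeding this into Proposition \ref{prop:3.2}, the $\delta_{\mathscr B}$-summand cancels the correction $+(\Lbb-1)[\mu_{\gcd(\cdots)}(\mathbb C)]$ in (\ref{eq:3.3}), and summing over $\mathbf B_{\s}$ recovers the asserted formula.

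The main obstacle I expect is in the first step: verifying that the natural $\hat{\mu}$-action on the axis points of $X(\mathscr B,i)$, inherited through the arc-space description of $\mathscr S_{f_{\mathscr B_{\s}},O}$, matches precisely the $\mu_{m(Q_{\mathscr B_{\s}}^{\r})}(\mathbb C)$-structure used in Proposition \ref{prop:3.3}. This amounts to tracking the scaling action $\xi\cdot(\varphi(t),\psi(t))=(\varphi(\xi t),\psi(\xi t))$ through the monomial equation obtained on $v=0$, which is essentially already carried out in the case analysis inside the proof of Lemma \ref{lem:3.4}; the $\hat{\mu}$-equivariance of the multiplicative reparametrisation absorbing the constant $c$ is automatic since it commutes with the coordinate-scaling action.
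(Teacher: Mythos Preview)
Your proposal is correct and follows essentially the same route as the paper: compare $X(\mathscr B,i)\subset\mathbb A_{\mathbb C}^2$ with $f_{P_{\mathscr B_{\s},i}}^{-1}(1)\subset\mathbb G_{m,\mathbb C}^2$, identify the extra axis points as $[\mu_{m(Q^{\l})}(\mathbb C)]$ (only for $\mathscr B=\mathscr B_1$, $i=1$) and $[\mu_{m(Q_{\mathscr B_{\s}}^{\r})}(\mathbb C)]$ (only for $i=m_{\mathscr B}$), and then invoke Propositions \ref{prop:3.2} and \ref{prop:3.3}. Your analysis of when $\alpha=0$ or $\beta=0$ and the exponent identification via (\ref{eq:2.3}) and (\ref{eq:2.8}) is exactly what underlies the two displayed identities in the paper's proof; the paper simply states those identities without spelling out this computation.

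One remark on your ``main obstacle'': the $\hat\mu$-structure on $X(\mathscr B,i)$ is the quasi-homogeneous one, $\xi\cdot(u,v)=(\xi^{a_{\mathscr B,i}}u,\xi^{b_{\mathscr B,i}}v)$ for $\xi\in\mu_{m(P_{\mathscr B,i})}(\mathbb C)$, not something that needs to be traced back through arc spaces; restricted to $\{v=0\}$ with $i=m_{\mathscr B}$ this is $u\mapsto\xi^{a_{\mathscr B,m_{\mathscr B}}}u$ on the solutions of $u^{m(Q_{\mathscr B}^{\r})}=\mathrm{const}$, and since $\gcd(a_{\mathscr B,m_{\mathscr B}},m(Q_{\mathscr B}^{\r}))=1$ (as $a_{\mathscr B,m_{\mathscr B}}$ divides $m(P_{\mathscr B,m_{\mathscr B}})$ and $\gcd(m(P_{\mathscr B,m_{\mathscr B}}),m(Q_{\mathscr B}^{\r}))$ is controlled by the regular subdivision), this is $\hat\mu$-equivariantly $\mu_{m(Q_{\mathscr B}^{\r})}(\mathbb C)$. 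The paper does not make this explicit either, so your level of care here already exceeds what is written.
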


\begin{proof}
For any bamboo $\mathscr B$ of $\mathbf G_{\s}$ and $1<i<m_{\mathscr B}$, the complex variety $X(\mathscr B,i)$ is nothing else than $\{(u,v)\in\mathbb G_{m,\mathbb C}^2\mid f_{P_{\mathscr B,i}}(u,v)=1\}$, which is exactly $f_{P_{\mathscr B,i}}^{-1}(1)$ as denoted in Proposition \ref{prop:3.3}. In these notation we have
$$[X(\mathscr B_1,1)]=[f_{P_{\mathscr B_1,1}}^{-1}(1)]+[\mu_{m(Q^{\l})}(\mathbb C)]$$
and
$$[X(\mathscr B,m_{\mathscr B})]=[f_{P_{\mathscr B,m_{\mathscr B}}}^{-1}(1)]+[\mu_{m(Q_{\mathscr B}^{\r})}(\mathbb C)].$$
The theorem now follows directly from Propositions \ref{prop:3.2} and \ref{prop:3.3}. 
\end{proof}

\begin{examples}
{\it a)} Let $f(x,y)=(y^a+x^b)^A$, where $a$, $b$, $A$ are in $\mathbb N_{>0}$ and $(a,b)=1$. Applying Theorem \ref{main1} we get
$$\mathscr S_{f,O}=[\{(x,y)\in\mathbb A_{\mathbb C}^2\mid (y^a+x^b)^A=1\}]-(\Lbb-1)[\mu_A(\mathbb C)].$$

\noindent {\it b)} Let $(f,O)$ be an irreducible singularity with weight vectors $(a_i,b_i)^t$, $1\leq i\leq g$. The graph $\mathbf G_{\s}$ is obtained in terms of a sequence of toric modifications with respect to these weight vectors. 

\begin{center}
\begin{picture}(160,95)(-30,-55)
\put(-40,0){\line(1,0){60}}
\put(25,0){\circle*{1}}
\put(30,0){\circle*{1}}
\put(35,0){\circle*{1}}
\put(40,0){\line(1,0){87}}
\put(-40,0){\circle*{3}}
\put(-67,5){$P_{\mathscr B_1,1}$}
\put(-40,0){\line(0,1){25}}
\put(-40,0){\line(0,-1){25}}
\put(-45,-45){$\mathscr B_1$}
\put(-40,-25){\circle*{3}}
\put(-37,-28){$Q_{\mathscr B_1}^{\r}$}
\put(-37,23){$Q_{\mathscr B_1}^{\l}$}
\put(-40,25){\circle*{3}}
\put(0,0){\circle*{3}}
\put(-5,5){$P_{\mathscr B_2,1}$}
\put(0,0){\line(0,-1){25}}
\put(0,-25){\circle*{3}}
\put(3,-28){$Q_{\mathscr B_2}^{\r}$}
\put(-5,-45){$\mathscr B_2$}
\put(95,0){\circle*{3}}
\put(95,0){\line(0,-1){25}}
\put(95,-25){\circle*{3}}
\put(90,5){$P_{\mathscr B_g,1}$}
\put(98,-28){$Q_{\mathscr B_g}^{\r}$}
\put(90,-45){$\mathscr B_g$}
\put(128,0){\circle*{3}}
\put(123,-45){$\mathscr B_{g+1}$}
\end{picture}
\end{center}
Let $\mathscr B_i,\dots, \mathscr B_{g+1}$ denote the bamboos of $\mathbf G_{\s}$, where $\mathscr B_i$ is the unique bamboo of the $i$th floor. Note that, in this case, $P_{\mathscr B_i,1}=(a_i,b_i)^t$, $A_{\mathscr B_i,1,1,1}=a_{i+1}\cdots a_g$, and the multiplicities of the resolution on the exceptional divisors $E(P_{\mathscr B_i,1})$ are computed as follows:
\begin{gather*}
m(P_{\mathscr B_0,1})=0 \quad\text{convention},\\
m(P_{\mathscr B_1,1})=a_1b_1A_{\mathscr B_1,1,1,1}=a_1\cdots a_gb_1,\\
m(P_{\mathscr B_i,1})=a_im(P_{\mathscr B_{i-1},1})+a_ib_iA_{\mathscr B_i,1,1,1},\quad i\geq 2.
\end{gather*}
Then by Theorem \ref{main1} we have
$$\mathscr S_{f,O}=\sum_{i=1}^g\left([\{(x,y)\in\mathbb A_{\mathbb C}^2\mid x^{m(P_{\mathscr B_{i-1},1})}(y^{a_i}+x^{b_i})^{A_{\mathscr B_i,1,1,1}}=1\}]-(\Lbb-1)[\mu_{A_{\mathscr B_i,1,1,1}}(\mathbb C)]\right).$$
\end{examples}

\begin{remark}\label{rk46}
It is a fact that Hodge-Steenbrink spectrum is a crucial invariant in singularity theory. By \cite{St}, $H^j(M_f,\mathbb C)$ carries a canonical mixed Hodge structure compatible with the semisimple part of the monodromy $T_f$. This gives rise to the Hodge-Steenbrink spectrum $\Sp(f,O)$ of the singularity $f$, which is a fractional Laurent polynomial $\sum_{\alpha\in \mathbb Q}n_{\alpha}(f)t^{\alpha}$, where $n_{\alpha}(f)=\sum_{j\in \mathbb Z}(-1)^j\dim_{\mathbb C}Gr_F^{\lfloor{2-\alpha}\rfloor}H^{1+j}(M_f,\mathbb C)_{e^{-2\pi i\alpha}}$, with $F^{\bullet}$ the Hodge filtration. Using a Hodge realization Denef-Loeser \cite{DL} construct a linear map $\Sp: \mathscr M_{\mathbb C}^{\hat{\mu}}\to \mathbb Z[\mathbb Q]$, which is a ring homomorphism with respect to the convolution product $\ast$ in $\mathscr M_{\mathbb C}^{\hat{\mu}}$ (see \cite{GLM}), and by that work we have $\Sp(f,O)=\Sp(\mathscr S_{f,O})$. Now, it follows from \cite[Lemme 3.4.2]{G}, \cite[(2.1.2)]{Sa2} and Theorem \ref{main1} that 
$$\Sp(f,O)=\sum_{\mathscr B\in\mathbf B_{\s}}\left(\sum_{i=1}^{m_{\mathscr B}}\Sp([X(\mathscr B,i)])-\sum_{i=1}^{m_{\mathscr B}-1}\Sp([X(\mathscr B,i,i+1)])+\sum_{i=1}^{m_{\mathscr B}}\sum_{j=1}^{r_{\mathscr B,i}}\frac{(t-1)^2}{1-t^{1/A_{\mathscr B,i,j}}}\right).$$
Therefore in order to compute $\Sp(f,O)$ it suffices to study the spectrum of a quasi-homogeneous plane curve singularity.
\end{remark}

\begin{ack}
The author would like to thank The Abdus Salam International Centre for Theoretical Physics (ICTP), The Vietnam Institute for Advanced Study in Mathematics (VIASM) and Department of Mathematics - KU Leuven for warm hospitality during his visits. 
\end{ack}

\end{document}